\newtheorem{theorem}{Theorem}
\newtheorem{proposition}[theorem]{Proposition}
\newtheorem{lemma}{Lemma}
\theoremstyle{definition}
\newtheorem{definition}{Definition}
\newtheorem{example}{Example}
\theoremstyle{remark}
\providecommand{\bL}{\mathbb {L}}
\providecommand{\F}{\mathbb {F}_q}
\providecommand{\G}{\Gamma}
\providecommand{\g}{\gamma}
\providecommand{\La}{\Lambda}
\providecommand{\D}{\mathcal{D}}
\providecommand{\K}{\mathbb F_q((t^{-1}))}
\begin{document}

\title[Farey map, Diophantine approximation and trees]{Farey map, Diophantine approximation and Bruhat-Tits tree}

\author{Dong Han Kim}
\address{Department of Mathematics Education, Dongguk University-Seoul, Seoul 100-715, Korea}
\email{kim2010@dongguk.edu}

\author{Seonhee Lim}
\address{Department of Mathematics, Seoul National University, Seoul 151-747, Korea}
\email{slim@snu.ac.kr}

\author{Hitoshi Nakada}
\address{Department of Mathematics, Keio University, Yokohama 223-8522, Japan}
\email{nakada@keio.ac.jp}

\author{Rie Natsui}
\address{Department of Mathematics, Japan Women's University, Tokyo 112-8681, Japan}
\email{natsui@fc.jwu.ac.jp}

\keywords{Farey map, field of formal Laurent series, intermediate convergents, diophantine approximation, Bruhat-Tits tree, Artin map, continued fraction}

\subjclass[2010]{11J61, 11J70, 20G25, 37E25}

\date{}

\begin{abstract}
Based on Broise-Alamichel and Paulin's work on the Gauss map corresponding to the principal convergents, we continue the study of the Gauss map via Farey maps to contain all the intermediate convergents.
We define the geometric Farey map, which is given by time-1 map of the geodesic flow. We also define algebraic Farey maps, better suited for arithmetic properties, which produce all the intermediate convergents.
Then we obtain the ergodic invariant measures for the Farey maps and the convergent speed. 
\end{abstract}

\maketitle

\section{Introduction}
Since Artin's work \cite{Art}, the relation between the continued fraction expansion and the geodesic flow on the modular surface $\mathbb{H}^2 / SL_2(\mathbb{Z})$ has been studied extensively (see \cite{KatUga} and references therein). 
The map that gives the continued fraction expansion of a given real number in $(0,1)$, which is defined on $(0,1) \subset \mathbb{R}$ to itself by $g: x \mapsto \{1/x \}$, the fractional part of $1/x$, is called the continued fraction map or the Gauss map. 
It is precisely the first return map of the geodesic flow on the surface 
$\mathbb{H}^2/\Gamma$, where $\Gamma$ is a subgroup of $SL_2(\mathbb{Z})$ of 
index 2, corresponding to the dual of Farey tessellation \cite{Series}. 

More recently, A. Broise-Alamichel and F. Paulin studied the relation between continued fraction expansion for functions and geodesic flow in trees, extending Artin's work to function fields \cite{BroisePaulin, Paulin}.

On the other hand, the Farey map for the real case was 
introduced to find intermediate convergents by S. Ito \cite{Ito} and also 
as an intermittent model by M. Feigenbaum, I. Procaccia, 
and T. Tel \cite{FeiProTel}, independently. It is defined by
$$ 
F_\textrm{real}(x) = \begin{cases} \dfrac{x}{1-x} , & \text{ if }  0 \le x < \frac 12, \\ 
\dfrac{1-x}{x} , & \text{ if }  \frac 12 \le x < 1. \end{cases}
$$
The Gauss map is an acceleration of the Farey map: $g(x) = {F_\textrm{real}}^{n(x)}(x),
$
where $n(x)$ is the first partial quotient in the continued fraction expansion of $x$ (see \cite{Series} for the geometric meaning of the
Farey map).

In this article, we define two kinds of new maps, one which we call geometric Farey map, and the others which we call algebraic Farey maps for function fields. We investigate arithmetic properties of the intermediate convergents 
arising from these maps, and study ergodic properties of these maps.

Although a Farey map for function fields was constructed 
already in \cite{BerNakNat} based on the Euclidean algorithm for polynomials, 
we propose alternative definitions. 

We first construct the geometric Farey map, which is a geometric analogue of the Farey map for the real case. Namely, it is the time-one map of the geodesic flow on the modular ray (see Section~\ref{gfarey} for the detail). Unfortunately, the analogy is not so clear, since  there are many Ford spheres (see Section~\ref{sec:2.3} for definition) tangent to each other in a tree, unlike the real case. This leads us to define algebraic Farey maps.

The algebraic Farey maps has many advantages. 
First of all, the Farey map in \cite{BerNakNat} is either a special case or some acceleration of an algebraic Farey map (see Section~\ref{sec:3.2}).
It is roughly a composition of two time-one maps of the geodesic flow.
The map is given by the multiplication action of an element of $SL_2(\F[t])$ which preserves Ford spheres.

As in the real case, the Artin map is an acceleration of an algebraic Farey map by the time depending only on the degree of the first partial quotient. Moreover, we obtain all the intermediate convergents (see Section~\ref{sec:3.2}).

Note that the first return map of the geodesic flow was used in \cite{BroisePaulin} in relation with the Gauss map, 
but that no Farey map has been investigated 
geometrically 
yet. 

Let $\F(t)$ be the quotient field of polynomials over the finite field $\F$ of $q$ elements, where $q$ is a power of a prime.
Denote by $K$ the completion of $\mathbb F_q(t)$ with respect to the valuation $\nu_\infty(P/Q) = -\deg P + \deg Q$ and $\mathcal O$ be the corresponding discrete valuation ring. Then $K$ is the field of formal Laurent series
\[
K = \mathbb F_q ((t^{-1})) = \{ f = a_n t^n + \dots + a_1 t + a_0 + a_{-1} t^{-1} + \cdots : a_i \in \mathbb F_q \}.
\]
We denote by $|f|$ the absolute value, i.e.,  $|f| = q^{\deg (f)}=q^n$ for $f = \sum_{ -\infty < i \le n} a_i t^i \in \mathbb F_q ((t^{-1}))$, $a_n \ne 0$, with convention $\deg(0) =  -\infty$. 
Note that $\mathcal O = \{ f \in K : |f| \le 1\}$ and $K$ is non-Archimedean as $|f + g| = \max( |f|,|g|)$. 

Just as the Gauss map is defined for real numbers between zero 
and one, we restrict ourselves to the subset $\mathbb L=\{ f \in K : | f | < 1 \} = t^{-1} \mathcal{O}$ of $K$.
For $f \in {\mathbb L}$ and a polynomial $Q$, there exists a unique
polynomial $P$ such that $\deg (Qf - P) < 0$.  We put $\{ Qf \} = Qf - P$ for such $P$.
Now the Artin map, the analogue of the Gauss map, 
is defined as 
\[ \Psi : f \mapsto \{1/f \} \;\; \mathrm{for} \;\; f \in \mathbb L -\{0 \}. \]
Artin map was studied extensively in \cite{Paulin}, \cite{BroisePaulin}. They showed that the Artin map is the first return map of the geodesic flow on the modular surface which is a quotient of Bruhat-Tits tree by a lattice subgroup.

We first define the \textit{geometric Farey map $F$} on $\mathbb L \times \mathbb Z$ as  
$$F(f, n) = \begin{cases}
\left(tf - [tf], n+1\right),   &\text{ if } \deg(f) < -1 \text{ or } n < 0, \\
\left( \dfrac1{tf} - \left[\dfrac{1}{tf}\right], -(n+1) \right),    &\text{ if } \deg(f) = -1 \text{ and } n \ge 0
\end{cases}$$
and show that it is the time-one map of the geodesic flow.

Now let  $\mu$ be the Haar measure of $\mathbb F_{q}((t^{-1}))$ 
normalized as $\mu (\mathcal O) = 1$.
\begin{theorem}\label{thm1}
Let $\mu_G$ be the measure on $\mathbb L \times \mathbb Z$ defined as follows: for each measurable $\mathbf E \subset \mathbb L$,
$$\mu_G(\mathbf E \times \{n\}) = \begin{cases} \dfrac{q-1}{2q^{n}} \mu (\mathbf E), &\text{ for } n \ge 0, \\
\dfrac{q-1}{2q^{-n-1}} \mu (\mathbf E), &\text{ for } n < 0. \end{cases} $$
Then $\mu_G$ is an ergodic invariant measure for the geometric Farey map $F$.
\end{theorem}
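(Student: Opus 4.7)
The plan is to split the argument into an invariance verification and an ergodicity step that uses the first-return map of $F$ to the fiber $\mathbb L\times\{0\}$.

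For invariance, I would compute $\mu_G(F^{-1}(\mathbf E\times\{m\}))$ for each $m\in\mathbb Z$ by decomposing the preimage into the two branches. When $m\ge 0$ only the shift branch $\widetilde T:f\mapsto tf-[tf]$ contributes; on $\{\deg f<-1\}$ it is the linear map $f\mapsto tf$ with Jacobian $q$, while on $\{\deg f=-1\}$ the polynomial part absorbs the leading coefficient, and together these make $\widetilde T$ measure-preserving on $\mathbb L$. A short computation then reproduces $\mu_G(\mathbf E\times\{m\})$. When $m<0$ both branches contribute: branch~1 yields $\tfrac{q-1}{2q^{-m}}\mu(\mathbf E)$, and branch~2, which requires $\deg f=-1$, uses $\psi(f):=1/(tf)-[1/(tf)]$. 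Partitioning $\{\deg f=-1\}$ by the leading coefficient $c\in\mathbb F_q^{\ast}$, each slice $ct^{-1}+t^{-2}\mathcal O$ maps bijectively onto $\mathbb L$ under $\psi$ with constant measure-magnification $q$, giving $\mu(\psi^{-1}(\mathbf E))=\tfrac{q-1}{q}\mu(\mathbf E)$. Adding the two branch contributions for $m<0$ collapses to $\tfrac{q-1}{2q^{-m}}\bigl(1+(q-1)\bigr)\mu(\mathbf E)=\mu_G(\mathbf E\times\{m\})$.

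For ergodicity, the key observation is that the first-return map of $F$ to the base $\mathbb L\times\{0\}$ is the Artin map $\Psi$. Starting from $(f,0)$ with $\deg f=-d$, $d\ge 1$, a direct induction shows that the first $d$ iterates are $(tf,1),(t^2f,2),\dots,(t^{d-1}f,d-1),(\psi(t^{d-1}f),-d)$, and the next $d$ iterates apply $\widetilde T$ to return to the $n=0$ fiber. Writing $1/f=b_dt^d+b_{d-1}t^{d-1}+\cdots$, the expansion $1/(t^df)=b_d+b_{d-1}t^{-1}+\cdots$ gives $\psi(t^{d-1}f)=b_{d-1}t^{-1}+\cdots+b_0t^{-d}+b_{-1}t^{-d-1}+\cdots$, and $\widetilde T^d$ drops the first $d$ Laurent coefficients to leave $b_{-1}t^{-1}+b_{-2}t^{-2}+\cdots=\Psi(f)$. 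Since $\mu_G$-almost every orbit returns to $\mathbb L\times\{0\}$ (in time $2d$), a standard induced-transformation argument reduces ergodicity of $F$ to ergodicity of $\Psi$ on $(\mathbb L,\mu)$, which is established in \cite{BroisePaulin,Paulin}.

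The main obstacle I expect is the bookkeeping in the invariance check: the two branches of $F$ act on overlapping subsets of the fibers, and the Jacobians for $\widetilde T$ on $t^{-2}\mathcal O$ and for $\psi$ on each slice $ct^{-1}+t^{-2}\mathcal O$ must be combined carefully to see the cancellation that shifts the denominator from $q^{-m}$ to $q^{-m-1}$ for $m<0$. The first-return identification is conceptually clean but depends on the explicit Laurent-series calculation above.
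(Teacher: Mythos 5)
Your proposal is correct and follows essentially the same route as the paper: invariance is checked fiber by fiber via the same preimage decomposition (your slice computation for $\psi$ on $ct^{-1}+t^{-2}\mathcal O$ is exactly the paper's identity $\mu((a+\mathbf E)^{-1})=\mu(\mathbf E)$ for $a\in\mathbb F_q^{\ast}$), and ergodicity is reduced to the Artin map, which the paper phrases as $\Psi$ being a jump transformation of $F$ and you phrase as the first-return map to $\mathbb L\times\{0\}$, using precisely the relation $F^{-2\deg(f)}(f,0)=(\Psi(f),0)$ already noted in Section~\ref{gfarey}. The one step to make explicit in your compressed $m\ge 1$ case is that the preimage of $\mathbf E\times\{m\}$ in the fiber $m-1\ge 0$ is only $t^{-1}\mathbf E$ (points with $\deg f=-1$ exit through the inversion branch), so it is the Jacobian-$q$ contraction on $\{\deg f<-1\}$, not measure preservation of $\widetilde T$ on all of $\mathbb L$, that cancels the factor-$q$ decay of the fiber weights.
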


Next, we combine two time-one maps of the geodesic flow to define an algebraic Farey map and generalize to a family of maps depending on a function:
for a given $h \in {\mathbb L}$ with $\deg(h) = -1$,
the \textit{algebraic Farey map $F_h$ associated to $h$} is defined by 
$$ F_h (f) = \begin{cases}
\dfrac{f}{1-[(1-h) f^{-1}]f},  \quad & \deg (f) \le -1, \\
\dfrac{1-[ (1-h) f^{-1}]f}{f},  & \deg (f) = 0. \\
\end{cases}$$
In Section~\ref{sec:3.2}, we use the above Farey map to construct intermediate convergents and show that we obtain a nice Diophantine property. We also show that if a rational function satisfy a better Diophantine property, then it is an intermediate convergent constructed by algebraic Farey maps.

For each $n \ge 0$, denote  
$\mathbb J_n = \{  f \in \mathcal O : \deg(f) = - n \}.$ 
Define a measure $\mu_A$ on $\mathcal O$ by 
$$
\mu_A ({\bf D}) = \frac{q^2}{2q-1} \cdot \mu ({\bf D} \cap \mathbb L) + \frac{q}{2q-1} \cdot \mu ( {\bf D} \cap \mathbb J_0)
$$
for each Borel set ${\bf D} \subset \mathcal O$. 

\begin{theorem}\label{thm2}
For each $h$, the probability measure $\mu_A$ on $\mathcal O$ is an ergodic invariant measure for the algebraic Farey map $F_h$.
\end{theorem}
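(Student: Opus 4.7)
The plan is to prove invariance by a direct transfer operator (Perron--Frobenius) computation based on the piecewise structure of $F_h$, and then deduce ergodicity from the known ergodicity of the Artin map $\Psi$ via the acceleration identity developed in Section~\ref{sec:3.2}.

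For invariance, I would first give a piecewise description of $F_h$. On each coset $c + \mathbb L \subset \mathbb J_0$ with $c \in \mathbb F_q^*$, a short calculation reduces $F_h$ to $f \mapsto f^{-1} - c^{-1}$, which is a bijection onto $\mathbb L$ with Jacobian $|f|^{-2} = 1$. On $\mathbb L$ partitioned into the cylinders $\mathcal C_a = \{f \in \mathbb L : [(1-h)f^{-1}] = a\}$ indexed by polynomials $a$ of degree $n = -\deg f \ge 1$, the map $F_h$ acts as the Möbius transformation $f \mapsto f/(1-af)$ coming from the matrix $\bigl(\begin{smallmatrix} 1 & 0 \\ -a & 1\end{smallmatrix}\bigr) \in SL_2(\mathbb F_q[t])$; the Jacobian simplifies to $|1-af|^{-2} = q^2$, because the identity $1 - af = h + \varepsilon f$ with $\varepsilon = (1-h)f^{-1} - a$ of negative degree forces $\deg(1-af) = \deg h = -1$. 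Next I would enumerate preimages: solving $F_h(f) = g$ for $g \in \mathbb J_{n-1}$ gives the explicit formula $f = g/(1+ga)$, and the requirement $[(1-h)f^{-1}] = a$ expands to the triangular polynomial equation $[ha] = [(1-h)g^{-1}]$, which pins down $a_n, a_{n-1}, \ldots, a_1$ uniquely while leaving $a_0 \in \mathbb F_q$ free. This yields exactly $q$ preimages of $g$ from cylinders of a given degree-type, and, when $g \in \mathbb L$, an additional $q-1$ preimages (one per coset) in $\mathbb J_0$. A short transfer operator computation with the density $\rho = \tfrac{q^2}{2q-1}\mathbf 1_{\mathbb L} + \tfrac{q}{2q-1}\mathbf 1_{\mathbb J_0}$ of $\mu_A$ then verifies $L_{F_h}\rho = \rho$ on $\mathbb J_0$ and on $\mathbb J_m$ ($m \ge 1$) separately, the arithmetic cancellation $q + q(q-1) = q^2$ delivering the right normalization.

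For ergodicity, take any $F_h$-invariant Borel set $A \subset \mathcal O$ (strictly invariant, without loss of generality) and put $B = A \cap \mathbb L$. The identity $\Psi(f) = F_h^{-\deg f + 1}(f)$ on $\mathbb L$ from Section~\ref{sec:3.2} immediately shows $B$ is $\Psi$-invariant: for $f \in \mathbb L$ we have $\Psi(f) \in A \iff F_h^{r(f)}(f) \in A \iff f \in A$. The Artin map is ergodic on $\mathbb L$ with respect to Haar measure (which itself is $\Psi$-invariant in this non-Archimedean setting, as the transfer sum $\sum_n (q-1)q^n \cdot q^{-2n} = 1$ confirms) by \cite{BroisePaulin}, so $\mu(B) \in \{0, \mu(\mathbb L)\}$. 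Since $F_h(\mathbb J_0) \subset \mathbb L$, one has $A \cap \mathbb J_0 = F_h|_{\mathbb J_0}^{-1}(B)$, and because $F_h|_{\mathbb J_0}$ restricts to a measure-preserving bijection on each coset $c + \mathbb L$ onto $\mathbb L$, this forces $\mu(A \cap \mathbb J_0)$ to be null or full in step with $\mu(B)$; hence $\mu_A(A) \in \{0, 1\}$.

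The main technical obstacle is the preimage analysis in the invariance step --- specifically, the combinatorial count showing that $[ha] = [(1-h) g^{-1}]$ admits exactly $q$ solutions in $a$. This requires a careful coefficient-by-coefficient matching that determines $a_n$ from the leading coefficient of $g$ (via $h_{-1} a_n = 1/g_{-(n-1)}$) and then successively $a_{n-1}, \ldots, a_1$ by upper-triangular back-substitution, leaving $a_0$ as the sole free parameter. Once this count and the uniform Jacobian $|1-af|^{-2} = q^2$ are in hand, the remaining Perron--Frobenius and Rohlin-tower-style arguments are routine.
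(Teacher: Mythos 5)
Your proposal is correct and in substance follows the paper's own route: your branch enumeration ($q$ preimage branches from $\mathbb L$ with Jacobian $q^{2}$, plus $q-1$ branches from $\mathbb J_0$ with Jacobian $1$, and the cancellation $q+q(q-1)=q^{2}$) is exactly the paper's case analysis for sets of the form $(P+\mathbf B)^{-1}$ with $\deg P=0$ and $\deg P>0$, merely rephrased as a transfer-operator identity $L_{F_h}\rho=\rho$ for the density of $\mu_A$. For ergodicity you simply spell out in detail the same jump-transformation argument (via $\Psi(f)=F_h^{-\deg f+1}(f)$ and ergodicity of the Artin map) that the paper invokes by citing Schweiger.
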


Let $U_\ell/V_\ell$ be an intermediate convergent. Our last theorem is the following theorem of convergence rate.
\begin{theorem}\label{thm3}
For $\mu$-a.e. $f$, we have 
\[
\lim_{\ell \to \infty}
\frac{1}{\ell} \log_{q} \left| f \, - \, \frac{U_{\ell}}{V_{\ell}} \right| 
\, = \, -\frac{2q}{2q - 1}.
\]
\end{theorem}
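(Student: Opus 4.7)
The plan is to convert the theorem into a Birkhoff ergodic statement for $F_h$ under $\mu_A$, using the logarithmic Jacobian of $F_h$ as the observable. My starting point is the non-Archimedean Diophantine identity
\[
\left| f - \frac{U_\ell}{V_\ell} \right| \;=\; \frac{|F_h^\ell f|}{|V_\ell|^2},
\]
which I expect to follow from the matrix cocycle description of $F_h$ developed in Section~\ref{sec:3.2}: writing $f = M_\ell \cdot F_h^\ell f$ with $M_\ell = \left(\begin{smallmatrix} P_\ell & U_\ell \\ Q_\ell & V_\ell \end{smallmatrix}\right) \in GL_2(\mathbb{F}_q[t])$ a product of the two elementary matrices defining $F_h$, the identity reduces to the dominance $|V_\ell| > |Q_\ell|\,|F_h^\ell f|$ together with $|\det M_\ell| = 1$. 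Taking $\log_q$ reduces the problem to computing $\lim \frac{1}{\ell}\log_q|V_\ell|$ and checking $\log_q|F_h^\ell f| = o(\ell)$ almost surely.

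I would then compute $|F_h'|$ piecewise. On the $\deg f \le -1$ branch, $F_h'(f) = (1 - af)^{-2}$ with $a = [(1-h)f^{-1}]$; the expansion $1 - af = h + \{(1-h)/f\} \cdot f$ combined with $\deg h = -1$ and $\deg(\{(1-h)/f\} \cdot f) \le -2$ forces $|1 - af| = q^{-1}$, so $|F_h'(f)| = q^2$ on $\mathbb{L}$. On the $\deg f = 0$ branch, direct differentiation gives $F_h'(f) = -f^{-2}$ with $|f| = 1$, hence $|F_h'(f)| = 1$ on $\mathbb{J}_0$. Since $\log_q|F_h'|$ is bounded, Theorem~\ref{thm2} and the Birkhoff ergodic theorem give
\[
\lim_{\ell \to \infty} \frac{1}{\ell} \log_q \bigl|(F_h^\ell)'(f)\bigr| \;=\; 2\,\mu_A(\mathbb{L}) \;=\; \frac{2q}{2q-1}
\]
for $\mu_A$-a.e.\ $f$, hence for $\mu$-a.e.\ $f$ since $\mu_A$ and $\mu$ are equivalent on $\mathcal{O}$. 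The Möbius chain rule together with dominance gives $|(F_h^\ell)'(f)| = |Q_\ell F_h^\ell f + V_\ell|^2 = |V_\ell|^2$, so $\frac{1}{\ell}\log_q|V_\ell| \to q/(2q-1)$.

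To close, I need $\log_q|F_h^\ell f| = o(\ell)$ almost surely; this should follow from a standard Borel--Cantelli argument since $\mu_A\{|f| \le q^{-n}\}$ decays geometrically, giving $|F_h^\ell f| \ge q^{-\varepsilon \ell}$ eventually for every $\varepsilon > 0$. Combining with the Diophantine identity yields the claimed limit $-2q/(2q-1)$. The main obstacle, really resting on the analysis in Section~\ref{sec:3.2}, is the simultaneous verification of two arithmetic facts: that $U_\ell/V_\ell$ produced by the matrix cocycle is indeed the $\ell$-th intermediate convergent of $f$, and that $V_\ell$ dominates $Q_\ell \cdot F_h^\ell f$; once these are secured, the rest is routine ergodic theory.
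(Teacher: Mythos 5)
Your overall plan (Birkhoff's theorem for $F_h$ with respect to $\mu_A$ applied to $\log_q|F_h'|$, whose integral is $2\mu_A(\mathbb L)=\tfrac{2q}{2q-1}$) is a genuinely different route from the paper's, and your Jacobian computation ($|F_h'|=q^{2}$ on $\mathbb L$, $|F_h'|=1$ on $\mathbb J_0$) is correct. However, your two key arithmetic identities fail at intermediate times, because you have placed the convergent pair in the wrong column of the cocycle matrix. By the Proposition in Section~\ref{sec:4.2}, the product $M_h(f)\cdots M_h(F_h^{\ell-1}(f))$ has the intermediate convergent $(U_\ell,V_\ell)=(U^h_{k,i},V^h_{k,i})$ as its \emph{first} column and the principal convergent $(P_k,Q_k)$ as its \emph{second} column; in particular $|V_\ell|=|Q_{k+1}|$ for all $1\le i\le \deg A_{k+1}$. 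Writing $g_\ell=F_h^{\ell}(f)$, one has $|g_\ell|=q^{\,i-\deg A_{k+1}}$, and the determinant-$\pm1$ computation gives
\[
\left| f-\frac{U_\ell}{V_\ell}\right| \;=\; \frac{1}{|V_\ell|\,\bigl|V_\ell\, g_\ell+Q_k\bigr|}\;=\;\frac{1}{|V_\ell|^{2}\,|g_\ell|}\;=\;\frac{q^{-i}}{|Q_k|\,|Q_{k+1}|},
\]
so $|F_h^{\ell}f|$ appears in the \emph{denominator}, not in the numerator as in your proposed identity $|f-U_\ell/V_\ell|=|F_h^{\ell}f|/|V_\ell|^{2}$ (the two agree only when $i=\deg A_{k+1}$). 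Likewise $|(F_h^{\ell})'(f)|=|V_\ell\, g_\ell+Q_k|^{2}=|V_\ell|^{2}|g_\ell|^{2}$, not $|V_\ell|^{2}$, so your derivation of $\tfrac1\ell\log_q|V_\ell|\to\tfrac{q}{2q-1}$ from Birkhoff does not stand as written; that limit is true but requires $\deg A_{k+1}=o(\ell)$, which Birkhoff alone does not provide.

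The good news is that your two errors cancel: combining the corrected identities gives the exact relation $|f-U_\ell/V_\ell|=|F_h^{\ell}f|\cdot|(F_h^{\ell})'(f)|^{-1}$ (for $i\ge1$, and with the obvious analogue at principal times), so your scheme can be repaired. Birkhoff applied to $\log_q|F_h'|$ yields $\tfrac1\ell\log_q|(F_h^{\ell})'(f)|\to\tfrac{2q}{2q-1}$ $\mu_A$-a.e., and your Borel--Cantelli estimate gives $\log_q|F_h^{\ell}f|=o(\ell)$ a.e.\ (applied at the block-starting times it also controls $\deg A_{k+1}=o(\ell)$), which together prove the theorem. This repaired argument is quite different from the paper's proof, which never invokes Theorem~\ref{thm2}: the paper simply divides the exact error exponent coming from Theorem~\ref{thm:3.1}, namely $-(2\sum_{n\le k}\deg A_n+\deg A_{k+1}+i)$, by $\ell=\sum_{n\le k}\deg A_n+k+i$ and invokes the a.e.\ limits $\tfrac1k\sum_{n\le k}\deg A_n\to\tfrac{q}{q-1}$ and $\tfrac{\deg A_{k+1}}{k}\to0$ from \cite{BeNa}. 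Your route buys a dynamical interpretation of the constant as $2\mu_A(\mathbb L)$ and avoids citing the metric theory of continued fractions, but as submitted the starting identity, the dominance claim, and the $|V_\ell|$-asymptotics all need the column correction above.
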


\section{Preliminary : continued fraction expansion and geodesic flow on Bruhat-Tits tree of $SL_2(K)$}

Before defining Farey maps in the next section, 
let us recall 
Paulin's geometric interpretation \cite{Paulin} of the Artin map in this 
section.

Let $G = SL_2(K)= SL_2(\mathbb F_q ((t^{-1})))$ and  $\Gamma =SL_2(\mathbb F_q [t])$ be the modular group of Weil.


\subsection{Bruhat-Tits tree}\label{sec:2.1}

\textit{The Bruhat-Tits tree} $T_q$ of $G$ is a $(q+1)$-regular tree whose vertex set is the set of homothety classes (by $K^\times $) of $\mathcal{O}$-lattices in $V= K \times K$, i.e., classes of rank-$2$ free $\mathcal{O}$--submodules that generate $V$ as a vector space.
Two vertices $\La$ and $\La'$ have a common edge if and only if there exist representatives $L$, $L'$ of $\La$ and $\La'$ such that $L' \subset L$ and $L/L'$ is isomorphic to $\mathcal{O}/t^{-1} \mathcal{O} = \F$. 
Denote by $ \{e_1 = ( \begin{smallmatrix} 1 \\ 0 \end{smallmatrix} ) , e_2 = \left( \begin{smallmatrix} 0 \\ 1 \end{smallmatrix} \right) \}$ the canonical basis of $V$.
For $a,b,c,d \in K$, we denote by $[\begin{smallmatrix} a & b \\ c & d \end{smallmatrix}]$ the class of the lattice 
$$ \begin{pmatrix} a & b \\ c & d \end{pmatrix} 
\left( e_1 \mathcal O \oplus e_2 \mathcal O \right) =  
\begin{pmatrix} a \\ c \end{pmatrix} \mathcal O \oplus  \begin{pmatrix} b \\ d \end{pmatrix} \mathcal O .$$
Note that there are many ways to express a vertex by such a matrix as the stabilizer of a vertex is isomorphic to $SL_2(\mathcal{O})$.  
Let us denote the vertex of the standard lattice class $\left[ \begin{smallmatrix} 1 & 0 \\ 0 & 1 \end{smallmatrix} \right]$ by $x_*$.

The metric on $T_q$ is given by assigning length $1$ to every edge. 
A \textit{geodesic ray} is an isometry $[0, \infty[ \to T$. 
The \textit{(Gromov) boundary} $\partial  T_{q}$ of 
$T_{q} $ is defined as the set of equivalence classes of geodesic rays where two geodesic rays are equivalent if and only if their intersection is still a geodesic ray.

\subsection{Action of $G$ and $\G$ on the boundary of Bruhat-Tits tree} 


The action of $G=SL(V)$ on $T_q$ defined by $g [L] = [g L] $, for $g \in G$, is well-defined. This action is transitive on the set of edges and extends naturally to $T_q \cup \partial T_q$ as well. 

By this action, the boundary $\partial T_q$ can be identified with the projective line $\mathbb P^1 (K) = K \cup \{ \infty \}$. 
More precisely, for a given equivalence class of the geodesic rays, we choose a representative geodesic ray with vertices $[L_n]$ such that $L_{n+1} \subset L_n$. The associated element of $\mathbb P ^1 (K)$ is the class of the unique line that contains the intersection of $L_n$'s.

For $n \in \mathbb{Z}$, let $L_n$ be the $\mathcal{O}$-lattice 
with basis $\{ t^n e_1, e_2 \}$ and $\Lambda_n=\left[\begin{smallmatrix} t^n & 0 \\ 0 & 1 \end{smallmatrix}\right]$ be the corresponding vertex in the tree $T_q$. 
The geodesic ray $\D_\infty$ with vertices $\La_n$, $n \ge 0$, (strictly speaking a quotient graph of groups with graph $\D_\infty$), called \textit{the fundamental ray of $\G$}, is a fundamental domain for the action of $\G$ on $T_q$, i.e., the orbit of $\D_\infty$ under $\G$-action cover $T_q$. 

\begin{lemma}\label{prop:action}
The action of $G$ on $\partial T_q$ corresponds to the action of $G$ by homographies on $K \cup \{ \infty \}$. 
Its restriction to $K$ is as follows : if $f \in K$ and $\g = (\begin{smallmatrix} a & b \\ c & d \end{smallmatrix}) \in SL(V)$, then $\g \circ f = (af+b) / (cf+d)$.
\end{lemma}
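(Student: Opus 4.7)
The plan is to observe that the identification $\partial T_q \cong \mathbb P^1(K)$ recalled just before the lemma is built from purely linear-algebraic data on $V = K \times K$, so the desired equivariance and the explicit homography formula should fall out essentially tautologically, once the bijection is set up carefully.

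First I would verify that the map $\xi \mapsto \ell(\xi)$ is well-defined. Given a geodesic ray with vertices $[L_n']$, one can choose representatives $L_n$ inside their homothety classes so that $L_{n+1} \subset L_n$ and $L_n/L_{n+1} \cong \mathbb F_q$; this is done inductively by rescaling each $L_{n+1}$ by a suitable power of $t$, using the definition of an edge. The intersection $L_\infty := \bigcap_n L_n$ is a closed, torsion-free $\mathcal O$-submodule of $V$; since $[L_0 : L_n] = q^n \to \infty$ it cannot be of rank two, and non-Archimedean compactness together with the structure of a descending chain in a tree shows it is nonzero, so $L_\infty$ is free of rank one. Hence $\ell(\xi) := K \cdot L_\infty \in \mathbb P^1(K)$ is unambiguous, and a short check shows that it depends neither on the choice of descending representatives within the prescribed homothety classes nor on the choice of tail of the ray.

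Next I would prove equivariance. For $g \in G$, the sequence $(gL_n)$ is again a descending chain of lattices with quotients of order $q$, representing the ray $g\cdot\xi$; since $g$ is $\mathcal O$-linear on $V$,
\[
\bigcap_n g L_n \;=\; g\Bigl(\bigcap_n L_n\Bigr),
\]
so $\ell(g\xi) = g \cdot \ell(\xi)$. In other words, the boundary action of $G$ on $\partial T_q$ transports under the bijection to the natural left action of $G$ on lines in $V$, i.e.\ the standard projective action on $\mathbb P^1(K)$.

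Finally I would unpack this in the inhomogeneous coordinate $f \in K \cup \{\infty\}$. Associate $f \in K$ with the line through $(f,1)^T$ and $\infty$ with the line $K e_1$; this is consistent with the fundamental ray $\Lambda_n = \bigl[\begin{smallmatrix} t^n & 0 \\ 0 & 1 \end{smallmatrix}\bigr]$ converging to $\infty$, since its normalized representatives $\mathcal O e_1 \oplus t^{-n}\mathcal O e_2$ descend to $\mathcal O e_1$. Then for $\gamma = \bigl(\begin{smallmatrix} a & b \\ c & d \end{smallmatrix}\bigr) \in G$,
\[
\gamma \begin{pmatrix} f \\ 1 \end{pmatrix} \;=\; \begin{pmatrix} af+b \\ cf+d \end{pmatrix},
\]
so $\gamma$ sends the projective class $[f:1]$ to $[(af+b) : (cf+d)] = (af+b)/(cf+d)$ when $cf+d \neq 0$ (and to $\infty$ otherwise), which is the advertised homography. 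The only real obstacle is the first step — making rigorous that a descending chain of lattices intersects in a rank-one submodule and that the resulting line is independent of the permitted choices; once this is in place the remaining steps are purely formal.
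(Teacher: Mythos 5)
Your proof is correct, and it takes a more structural route than the paper. The paper's proof is a two-line matrix computation: it represents the endpoint $f$ by the explicit ray $\left(\begin{smallmatrix} 1 & f \\ 0 & 1 \end{smallmatrix}\right)\D_0$, observes that $\g\left(\begin{smallmatrix} 1 & f \\ 0 & 1 \end{smallmatrix}\right)\D_0 = \left(\begin{smallmatrix} a & af+b \\ c & cf+d \end{smallmatrix}\right)\D_0$, and reads off the limit point from the second column; this implicitly takes for granted exactly the facts you prove, namely where a general ray $g\D_0$ converges under the boundary identification. You instead establish the $G$-equivariance of the identification $\partial T_q \cong \mathbb P^1(K)$ once and for all: nested representatives $L_{n+1}\subset L_n$ intersect in a rank-one module, the line it spans is independent of the allowed choices, and linearity of $g$ gives $\bigcap_n gL_n = g\bigl(\bigcap_n L_n\bigr)$, after which the homography formula is the standard projective computation $[f:1]\mapsto[af+b:cf+d]$; your consistency check with the fundamental ray ($\La_n$ normalizing to $\mathcal O e_1\oplus t^{-n}\mathcal O e_2$, hence $\infty \leftrightarrow Ke_1$) correctly pins down the coordinate chart used in the paper. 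What your approach buys is that the convergence claims the paper asserts without comment become consequences of one lemma; what it costs is the technical step you flag yourself: that for a geodesic ray the intersection $\bigcap_n L_n$ is nonzero. That step does require the geodesic (no-backtracking) hypothesis, not merely that the chain is descending with index $q$ (for a backtracking chain such as $L_{2n}=t^{-n}L_0$ the intersection is zero); the clean argument is that $d([L_0],[L_n])=n$ forces $L_n$ to have elementary divisors $(1,t^{-n})$ in $L_0$, so each $L_n$ contains a vector primitive in $L_0$, and these form nested nonempty closed subsets of the compact set of primitive vectors of $L_0$ (compactness of $\mathcal O$ uses that $\mathbb F_q$ is finite and $K$ is complete). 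With that spelled out, or with a citation to the corresponding statement on ends of the tree in Serre's \emph{Trees}, your argument is complete.
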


\begin{proof}
Denote by $\D_0$ the geodesic ray from $x_*$ to $0 \in \partial T_q$, which have vertices $\La_{-n} =\left[\begin{smallmatrix} 1 & 0 \\ 0 & t^{n} \end{smallmatrix}\right]$, for $n \ge 0$.
For $g \in G$, we denote by $g \D_0$ the geodesic ray with vertices $\{ g \left[\begin{smallmatrix} 1 & 0 \\ 0 & t^{n} \end{smallmatrix}\right] \}_{n \ge 0}$.
Consider the geodesic ray $(\begin{smallmatrix} 1 & f \\ 0 & 1 \end{smallmatrix})\D_0$ to $f$.
Then we have
$\g \circ (\begin{smallmatrix} 1 & f \\ 0 & 1 \end{smallmatrix}) \D_0 = (\begin{smallmatrix} a & af+b \\ c  & cf+d \end{smallmatrix}) \D_0 ,$
which is a geodesic ray to $\g \circ f = (af+b) / (cf+d)$.
\end{proof}


The action of $G$ is transitive on the set of triplets of points on the boundary $\partial T_q$ ($G/\{\pm Id\}$ acts simply transitively). The orbit of $\infty$ under $\Gamma$ is $\mathbb{F}_q(t) \cup  \{\infty \}$.


\subsection{Horospheres and Ford spheres}\label{sec:2.3} 

A \textit{Buseman function at $\omega \in \partial T_q$} is the map $\beta_\omega : T_q \times T_q \to \mathbb R$ defined by 
$ \beta_\omega (x,y) = \lim_{t \to \infty}  [ d(y, c(t)) - d(x, c(t)) ],$
where $c(t)$ is any geodesic ray converging to $\omega$. It is independent of the choice of $c(t)$.

A \textit{horosphere based on $\omega$} is a level set of Buseman function $\beta_\omega$. By the cocycle relation $\beta_\omega(x,y) + \beta_\omega(y,z) = \beta_\omega(x, z)$, two points are in the same horosphere based on $\omega$ if and only if $\beta_\omega(x,y)=0$.
A \textit{horoball based on $\omega$} is the interior of a horosphere based on $\omega$, i.e. the union of all geodesic rays from a point on the horosphere $H$ to $\omega$.

Each vertex in the horosphere $H_{\infty,n}$ based on $\infty \in \partial T_q$ passing by $\La_n$  can be uniquely represented as
$$ \begin{bmatrix} 1 & A \\ 0 & t^{-n} \end{bmatrix}, \qquad A \in t \F[t]. $$
Consult \cite{Paulin} for details.
Note that the geodesic $]\infty,f[$ connecting $\infty$ and $f \in \partial T_q$ has vertices
$\left(\begin{smallmatrix} 1 & f \\ 0 & 1 \end{smallmatrix}\right) \La_{n} = \left[\begin{smallmatrix} 1 & f t^n \\ 0 & t^{n} \end{smallmatrix}\right]$, which approaches $f$ as $n$ goes to $-\infty$.

Consider the horosphere $H_\infty = H_{\infty,0}$ based on $\infty \in \partial T_q$ passing by $x_*$. 
We have $H_\infty = \G_\infty x_*$, where $\G_\infty = \textrm{Stab}_{\G}(\infty)$.
Let us denote by $HB_\infty$ the interior of $H_\infty$, which is the orbit of the fundamental ray $\D_\infty$ by $\G_\infty$.

\begin{definition} 
A \textit{Ford sphere} is a horosphere of the form $H_\g = H_{\g\infty} = \g H_\infty$ for some $\gamma \in \Gamma$, i.e. a horosphere based on a point in $\F(t)  \cup \{ \infty \}.$ In other words, $H_\frac PQ = \left \{ \g x_* : \g \in \G,  \g(\infty) = \frac PQ \right \}$.
A \textit{Ford ball} is a horoball of the form $HB_\g = HB_{\g \infty}$. The Ford ball $HB_\g$ is said to be \textit{associated to} $H_\g$.
\end{definition}

Ford spheres form a maximal $\G$-equivariant family of horoballs with disjoint interior (see Section 6.2 of \cite{Paulin}).
For Ford circles and Ford spheres in number fields, see \cite{Ford} (also \cite{Nakada}).


Let $H$ be a horosphere based on $\omega$. 
For all points $u\neq v$ in $\partial T_q - \{ \omega \}$, the geodesic $]u,\omega[$ intersects $H$ in one point $h$ and it intersects the geodesic $]\omega, v[$ in one geodesic ray $]\omega, p ]$. 
We denote by $(u,v)_{\omega, H}$ the algebraic distance from $h$ to $p$ if $u \neq v$, and $\infty$ otherwise. Now we call
$$ d_{\omega, H} (u,v) = q^{-(u,v)_{\omega, H}}$$
the \textit{Hamenst\"adt distance} on $\partial T_q - \{ \omega \}$.
 It is an ultrametric and $d_{\omega, H'} (u,v) = q^{-\beta_\omega(H', H)} d_{\omega, H} (u,v)$, where $\beta_\omega(H',H) = \beta_\omega(x',x)$ for any $x' \in H' , x \in H$. The following lemma follows immediately.

\begin{lemma}\label{Hamdist}
Let $\omega, \omega' \in \partial T_q$ and $H, H'$ be horospheres based on $\omega$, $\omega'$.
Denote by $p, p'$ the points of intersection of the geodesic $]\omega, \omega'[$ and $H, H'$ respectively.
Then we have
$$ d_{\omega, H} (\omega',v) \cdot d_{\omega', H'} (\omega,v) \cdot q^{\beta_\omega (p,p')} = 1,$$
for any $v \in \partial T_q$.
\end{lemma}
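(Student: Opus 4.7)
The plan is to unfold each Hamenst\"adt distance in terms of the Busemann cocycle on the ideal tripod with vertices $\omega,\omega',v$, and then reduce the identity to an elementary statement about Busemann functions along a bi-infinite geodesic.

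Let $c$ denote the central vertex in $T_{q}$ of this tripod. By the tree structure, each pair among the three geodesics $]\omega,\omega'[$, $]\omega,v[$, $]\omega',v[$ shares exactly a half-line ending at $c$; in particular $]\omega',\omega[\,\cap\,]\omega,v[\,=\,]\omega,c]$ and $]\omega,\omega'[\,\cap\,]\omega',v[\,=\,]\omega',c]$, so $c$ lies on the bi-infinite geodesic $]\omega,\omega'[$ alongside the given points $p$ and $p'$. Now for $d_{\omega,H}(\omega',v)$ the geodesic $]\omega',\omega[$ meets $H$ at the single point $p$ (since $]\omega',\omega[\,=\,]\omega,\omega'[$), while its shared ray with $]\omega,v[$ terminates at $c$; the definition of $(\cdot,\cdot)_{\omega,H}$ then identifies $(\omega',v)_{\omega,H}$ as the signed horocyclic displacement from $p$ to $c$ along $]\omega',\omega[$, i.e.\ $\beta_{\omega}(p,c)$, giving $d_{\omega,H}(\omega',v)=q^{-\beta_{\omega}(p,c)}$. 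A symmetric argument yields $d_{\omega',H'}(\omega,v)=q^{-\beta_{\omega'}(p',c)}$, whence
\[
d_{\omega,H}(\omega',v)\cdot d_{\omega',H'}(\omega,v)\cdot q^{\beta_{\omega}(p,p')} \,=\, q^{\beta_{\omega}(p,p') - \beta_{\omega}(p,c) - \beta_{\omega'}(p',c)}.
\]

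So the lemma reduces to showing $\beta_{\omega}(p,p') = \beta_{\omega}(p,c) + \beta_{\omega'}(p',c)$. Applying the cocycle relation at $\omega$, namely $\beta_{\omega}(p,p') = \beta_{\omega}(p,c) + \beta_{\omega}(c,p')$, this further reduces to $\beta_{\omega}(c,p') = \beta_{\omega'}(p',c)$, or equivalently (by antisymmetry of $\beta$) to $\beta_{\omega}(p',c) + \beta_{\omega'}(p',c) = 0$. The key observation is that $c$ and $p'$ both lie on $]\omega,\omega'[$, and on this bi-infinite geodesic $\beta_{\omega}+\beta_{\omega'}$ restricts to the zero cocycle: parameterizing $]\omega,\omega'[$ by signed arc length oriented from $\omega$ to $\omega'$, the restriction of $\beta_{\omega}$ has slope $+1$ (moving away from $\omega$) while that of $\beta_{\omega'}$ has slope $-1$ (moving toward $\omega'$), so their sum is constant along the geodesic and has zero increment between any two of its points. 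Applied to $p'$ and $c$ this closes the argument.

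The main obstacle is pinning down the sign convention in the ``algebraic distance from $h$ to $p$'' used to define $(u,v)_{\omega,H}$ and reconciling it with the rescaling identity $d_{\omega,H'}(u,v) = q^{-\beta_{\omega}(H',H)}\,d_{\omega,H}(u,v)$ stated just above the lemma; once the bookkeeping is fixed, the remainder of the proof is the routine chaining of the cocycle at $\omega$ with the $\omega\leftrightarrow\omega'$ sign flip on $]\omega,\omega'[$.
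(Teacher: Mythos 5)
Your argument is correct: writing $d_{\omega,H}(\omega',v)=q^{-\beta_\omega(p,c)}$ and $d_{\omega',H'}(\omega,v)=q^{-\beta_{\omega'}(p',c)}$ with $c$ the center of the tripod $\{\omega,\omega',v\}$, and then using the cocycle relation together with $\beta_\omega(x,y)+\beta_{\omega'}(x,y)=0$ for $x,y$ on $]\omega,\omega'[$, is exactly the routine verification the paper has in mind (it gives no written proof, stating only that the lemma ``follows immediately'' from the definition of the Hamenst\"adt distance). The sign convention you adopt for the ``algebraic distance'' (positive in the direction away from $\omega$, i.e.\ $(u,v)_{\omega,H}=\beta_\omega(h,c)$) is the one forced by the paper's later use $d_{\infty,H_\infty}(u,v)=|u-v|$, so your bookkeeping is consistent with the intended statement.
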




\subsection{Diophantine approximation and Artin map}\label{sec:2.2.3}


Let us consider geodesics starting from $\infty$.
Each geodesic that comes in $HB_\g$ for some $\g$ either goes out at some finite time or converges to a rational point, namely the base point of $HB_\g$. Thus any geodesic whose end point is not rational has a finite first return time to $\G x_* = \{ \g x_* : \g \in \G \}$, since the exit vertex, being on $H_\g$, belongs to $\gamma \G_\infty x_* \subset \G x_*$. 




\begin{figure}[h]
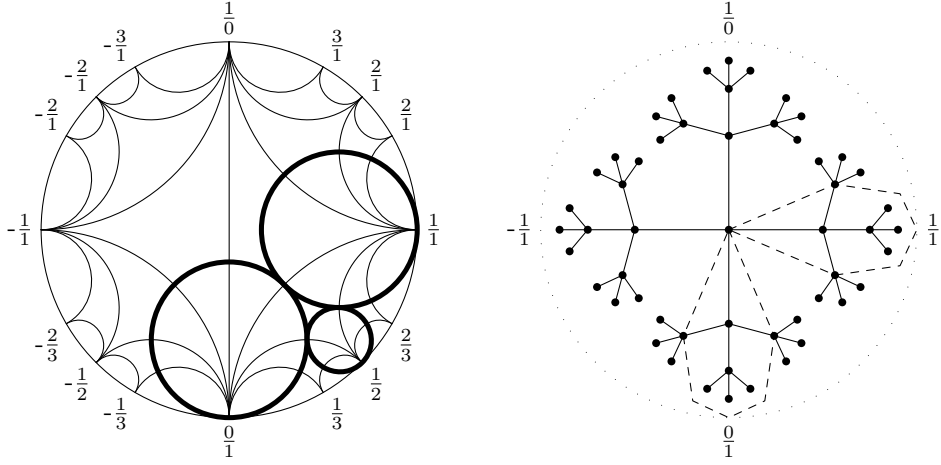

\includegraphics{diagram-1.mps}
\qquad 
\includegraphics{tree-1.mps}
\caption{Ford circles in a plane and Ford spheres in a tree}
\end{figure}


For a given irrational $f \in K$,
let $A_k$ be the partial quotients of $f$ and let
$$\frac{P_k}{Q_k} = A_0 + \dfrac{1}{A_1 + \dfrac{1}{A_2 + \dfrac{1}{\ddots + 1/A_k}}}, \qquad (P_k, Q_k) = 1, $$
be the $k$-th principal convergent of $f$.  
The recurrence relation reads: $Q_{k+1} = A_{k+1} Q_k + Q_{k-1}$ and $\deg(Q_{k+1}) - \deg(Q_k) = \deg(A_{k+1})$ for $k \ge 1$ with $Q_0 = 1$. 

Note that for any $k \ge 1$,
\begin{equation*} | \{ Q_{k} f \} | = \frac{1}{|Q_{k+1}|}.\end{equation*}
See \cite{BeNa}, \cite{Sch} and the references therein for details.

\begin{lemma}\cite{Paulin}\label{lem:2} The sequence of Ford balls intersected by the geodesic from $\infty$ to $f \in K$ is the sequence of Ford balls based on the principal convergents of $f$.
\end{lemma}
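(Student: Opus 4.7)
The plan is to derive a Diophantine criterion for when $]\infty, f[$ meets a given Ford ball, and then invoke the characterization of principal convergents as best approximations. The central claim is that for coprime $P, Q \in \F[t]$ with $Q \neq 0$, the geodesic $]\infty, f[$ enters $HB_{P/Q}$ if and only if $|Qf - P| < 1/|Q|$. To establish this, pick $\g = \bigl(\begin{smallmatrix} P & P' \\ Q & Q' \end{smallmatrix}\bigr) \in \G$ with $PQ' - P'Q = 1$; then $\g$ is a tree isometry with $\g\infty = P/Q$ and $\g(HB_\infty) = HB_{P/Q}$, so the condition is equivalent to $]\g^{-1}\infty, \g^{-1}f[$ entering $HB_\infty$. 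An auxiliary lemma — identifying the highest vertex on $]a, b[$ (for $a, b \in K$) as the branching point of the rays $]a, \infty[$ and $]b, \infty[$, sitting at Buseman height $-\log_q |a - b|$ relative to $x_*$ — says $]a, b[$ enters $HB_\infty$ iff $|a - b| > 1$. Using $\det \g = 1$ one computes
\[
|\g^{-1}\infty - \g^{-1}f| \;=\; \left|-\tfrac{Q'}{Q} - \tfrac{Q'f - P'}{P - Qf}\right| \;=\; \frac{|PQ' - P'Q|}{|Q|\cdot|Qf - P|} \;=\; \frac{1}{|Q|\cdot|Qf - P|},
\]
so the condition reads $|Q|\cdot|Qf - P| < 1$, as claimed.

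Next, applying this to the principal convergents: the recalled identity $|Q_k f - P_k| = 1/|Q_{k+1}|$ together with $|Q_{k+1}| > |Q_k|$ (equivalent to $\deg A_{k+1} \geq 1$) yields $|Q_k f - P_k| < 1/|Q_k|$, so each $HB_{P_k/Q_k}$ is entered by the geodesic. Conversely, assume $|Qf - P| < 1/|Q|$ with $\gcd(P, Q) = 1$, and choose $k$ maximal with $|Q_k| \leq |Q|$, so that $|Q| < |Q_{k+1}|$. If $P/Q \neq P_k/Q_k$, then $PQ_k - P_k Q$ is a nonzero polynomial, giving $|P/Q - P_k/Q_k| \geq 1/(|Q||Q_k|)$. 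On the other hand, the ultrametric bound applied to $P/Q - P_k/Q_k = (P/Q - f) + (f - P_k/Q_k)$ yields
\[
|P/Q - P_k/Q_k| \;\leq\; \max\!\left(\tfrac{1}{|Q|^2},\; \tfrac{1}{|Q_k||Q_{k+1}|}\right),
\]
and a short case analysis, using that two summands of unequal absolute value add to the larger in a non-Archimedean field, contradicts either $|Q_k| \leq |Q|$ or $|Q| < |Q_{k+1}|$. Hence $P/Q = P_k/Q_k$.

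Finally, Ford balls have pairwise disjoint interiors (Section~\ref{sec:2.3}), so $]\infty, f[$ crosses each $HB_{P_k/Q_k}$ in a single connected arc, and these arcs are linearly ordered along the geodesic. Since $|Q_k|$ is strictly increasing, and the location of the entry point (relative to the $\infty$ end) is governed monotonically by the Buseman function, the arcs occur in the order $k = 0, 1, 2, \ldots$, matching the indexing of convergents.

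The main obstacle is the entry criterion in the first paragraph: translating the abstract geometric condition into the explicit arithmetic inequality $|Qf - P| < 1/|Q|$, which depends on the apex lemma for geodesics between finite boundary points and on the $\G$-equivariance of the Ford ball family. Once this bridge is in place, the remaining arguments are a routine transcription of the classical non-Archimedean best-approximation theory into the function-field setting.
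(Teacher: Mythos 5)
Your proposal is essentially correct, but note that the paper does not prove this lemma at all: it is quoted from Paulin, and the surrounding text only records the consequence of Lemma~\ref{Hamdist} that $]\infty,f[$ meets $H_{P/Q}$ in two vertices, one vertex, or none according as $|f-P/Q|$ is $<$, $=$, or $>$ $1/|Q|^2$. So your first paragraph re-derives, by a different route, exactly the bridge the paper imports: where the paper (following Paulin) gets the criterion from the Hamenst\"adt distance together with $q^{\beta_\infty(p,p')}=|Q|^2$, you conjugate by $\g\in\G$ with $\g\infty=P/Q$ and use the apex-height observation for a geodesic between two finite boundary points; both yield $|Qf-P|<1/|Q|$, and your determinant computation is correct. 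The second half (best-approximation converse) is the standard non-Archimedean argument and is sound, with one small slack: as displayed, your ultrametric bound $|P/Q-P_k/Q_k|\le\max\bigl(1/|Q|^2,\,1/(|Q_k||Q_{k+1}|)\bigr)$ is not by itself a contradiction when $|Q|=|Q_k|$; you should instead note that $|f-P/Q|<1/|Q|^2\le 1/(|Q||Q_k|)$ and $|f-P_k/Q_k|=1/(|Q_k||Q_{k+1}|)<1/(|Q_k||Q|)$ are each \emph{strictly} smaller than $1/(|Q||Q_k|)$, so the ultrametric inequality directly contradicts $|PQ_k-P_kQ|\ge 1$; no case analysis is needed. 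Finally, the ordering claim at the end is only sketched: to see the arcs occur in the order $k=0,1,2,\dots$ you should say explicitly that the entry vertex of $]\infty,f[$ into $HB_{P_k/Q_k}$ sits at a Busemann depth (with respect to $\infty$) determined by $\deg Q_k$, which is strictly increasing in $k$, while the Busemann function is strictly monotone along the geodesic; with that sentence added, the argument is complete and constitutes a legitimate self-contained proof of the cited lemma.
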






For a given rational $\frac PQ \in \mathbb F_q(t)$, by Lemma~\ref{Hamdist} we have
$$ \left |\frac PQ - f \right| \cdot d_{\frac PQ, H} (\infty, f) \cdot q^{\beta_\infty (p,p')} = 1,$$
where $p, p'$ be the intersecting point of the geodesic $]\infty, \frac PQ[$ and horospheres $H_\infty$, $H=H_{\frac PQ}$ respectively.
We have $ q^{\beta_\infty (p,p')}  = |Q|^2$.

According to whether $]\infty, f[$ intersects $H_\frac PQ$ (i.e. two vertices in $H_\frac PQ$ belong to $]\infty, f[$), tangent to $H_\frac PQ$ (i.e., exactly one vertex in $H_\frac PQ$ belongs to $]\infty, f[$), or disjoint from $H_\frac PQ$, we have
$$ d_{\frac PQ, H} (\infty, f) > 1, \quad d_{\frac PQ, H} (\infty, f) = 1, \quad d_{\frac PQ, H} (\infty, f) < 1, $$
respectively, i.e., 
$$ \left| f - \frac PQ \right| < \frac{1}{|Q|^2}, \ \left| f - \frac PQ \right| = \frac{1}{|Q|^2}, \ \left| f - \frac PQ \right| > \frac{1}{|Q|^2},$$
respectively.

Since the Ford sphere tiles the tree $T_q$, the geodesic $]\infty, f[$ intersects infinity many Ford spheres if $f$ is not of the form $\frac PQ$. 
Hence, we obtain Hurwitz's theorem for the formal power series: 
 for any $f \in \K - \F(t)$. there are infinitely many $\frac PQ$'s such that 
$$ \left| f - \frac PQ \right| < \frac{1}{|Q|^2}.$$


\section{Geometric Farey map}

Based on Section~\ref{sec:2.2.3}, we define the geometric Farey map as the time-one map of the geodesic flow.
Let us denote the polynomial part of $f$ by $[f]$ :
$$[a_n t^n + a_{n-1}t^{n-1} + \cdots + a_0 + a_{-1} t^{-1} + \cdots ] = a_n t^n + \cdots + a_0. $$

\subsection{Geometric Farey map and convergents}\label{gfarey}

For each $f \in \bL$, consider the geodesic ray $(\begin{smallmatrix} 1 & f \\ 0 & 1 \end{smallmatrix}) \D_0$ from $x_*$ to $f$ with vertices consisting of $\{ (\begin{smallmatrix} 1 & f \\ 0 & 1 \end{smallmatrix})  \La_{-n}\}_{n \ge 0} =  \{ ( \begin{smallmatrix} 1 & f \\ 0 & 1 \end{smallmatrix})  \left[\begin{smallmatrix} 1 & 0 \\ 0 & t^{n} \end{smallmatrix}\right] \}_{n \ge 0}$.
Then we have
\begin{align*}
\begin{pmatrix} 1 & -t^{-1}[tf] \\ 0 & t^{-1} \end{pmatrix} \begin{pmatrix} 1 & f \\ 0 & 1\end{pmatrix} \Lambda_{-1} &=  \begin{bmatrix} 1 & t f -[tf] \\ 0 & 1 \end{bmatrix} = \begin{bmatrix} 1 & 0 \\ 0 & 1 \end{bmatrix} = \begin{pmatrix} 1 & f \\ 0 & 1\end{pmatrix} \Lambda_{0},\\
\begin{pmatrix} 1 & -t^{-1}[tf] \\ 0 & t^{-1} \end{pmatrix} \begin{pmatrix} 1 & f \\ 0 & 1\end{pmatrix} \Lambda_{-2} &=  \begin{bmatrix} 1 & t^2 f -t[tf] \\ 0 & t \end{bmatrix} = \begin{bmatrix} 1 & 0 \\ 0 & t \end{bmatrix} = \begin{pmatrix} 1 & f \\ 0 & 1\end{pmatrix} \Lambda_{-1},
\end{align*}
and if $\deg(f) = -1$, then we have
\begin{align*}
\begin{pmatrix} -\left[\frac{1}{tf}\right] & t^{-1} \\ 1 & 0 \end{pmatrix} \begin{pmatrix} 1 & f \\ 0 & 1\end{pmatrix} \Lambda_{-1}
&= \begin{bmatrix} - \frac{1}{[tf]} & 1 \\ 1 & 0  \end{bmatrix}
= \begin{bmatrix} 1 & 0 \\ 0 & 1 \end{bmatrix} = \begin{pmatrix} 1 & f \\ 0 & 1\end{pmatrix} \Lambda_{0}, \\
\begin{pmatrix} -\left[\frac{1}{tf}\right] & t^{-1} \\ 1 & 0 \end{pmatrix} \begin{pmatrix} 1 & f \\ 0 & 1\end{pmatrix} \Lambda_{-2}
&= \begin{bmatrix} - \frac{1}{[tf]} & 0 \\ 1 & t[tf]  \end{bmatrix} 
= \begin{bmatrix} 1 & 0 \\ 0 & t \end{bmatrix} = \begin{pmatrix} 1 & f \\ 0 & 1\end{pmatrix} \Lambda_{-1}.
\end{align*}
Thus, the left multiplication by $ \left( \begin{smallmatrix} 1 & -t^{-1}[tf] \\ 0 & t^{-1} \end{smallmatrix} \right)$ or $\left( \begin{smallmatrix} -\left[\frac{1}{tf}\right] & t^{-1} \\ 1 & 0 \end{smallmatrix} \right)$ for an $f$ with $\deg f = -1$
on geodesic ray $(\begin{smallmatrix} 1 & f \\ 0 & 1 \end{smallmatrix}) \D_0$  can be considered as a time-one map of the geodesic ray.
By these maps the geodesic ray to $f$ is sent to the geodesic ray to
$$\frac{f - t^{-1}[tf]}{t^{-1}} = tf - [tf] \ \text{ or } \ \frac{-f \left[\frac{1}{tf}\right] + t^{-1}}{f} = \frac{1}{tf} - \left[\frac{1}{tf}\right].$$
Therefore, we define the geometric Farey may as follows.
\begin{definition}[Geometric Farey map]
We define the geometric Farey map $F$ on $\mathbb L \times \mathbb Z$ onto itself by 
$$F(f, n) = \begin{cases}
\left(tf - [tf], n+1\right),   &\text{ if } \deg(f) < -1 \text{ or } n < 0, \\
\left( \dfrac1{tf} - \left[\dfrac{1}{tf}\right], -(n+1) \right),    &\text{ if } \deg(f) = -1 \text{ and } n \ge 0.
\end{cases}$$
\end{definition}

For each $f \in \mathbb L$, we have
$$ F^{-2\deg(f)} (f,0) = \left( \psi (f), 0 \right),$$

\begin{example}
Let $$f = \dfrac{1}{2t^3 + t^2 + 2 + r}, \quad \deg(r) < 0.$$
Then we have
\begin{align*}
F^1(f,0) &= \left( \frac{t}{2t^3 + t^2 + 2 + r}, 1 \right), &
F^2(f,0) &= \left( \frac{t^2}{2t^3 + t^2 + 2 + r}, 2 \right), \\
F^3(f,0) &= \left( \frac{t^2 + 2 + r}{t^3}, -3 \right), &
F^4(f,0) &= \left( \frac{2 + r}{t^2}, -2 \right),  \\
F^5(f,0) &= \left( \frac{2 + r}{t}, -1 \right), &
F^6(f,0) &= \left( r, 0 \right).
\end{align*}
\end{example}


Let $M(f, n)$ be the matrix defined by  
$$ M(f, n) = \begin{cases}
\begin{pmatrix} 1 & -t^{-1}[tf] \\ 0 & t^{-1} \end{pmatrix}^{-1} = \begin{pmatrix} 1 & 0 \\ 0 & t \end{pmatrix}, &\text{if } \deg(f) < -1 \text{ and } n \ge 0, \\
\begin{pmatrix} -\left[\frac{1}{tf}\right] & t^{-1} \\ 1 & 0 \end{pmatrix}^{-1} = \begin{pmatrix} 0 & 1 \\ t & t\left[\frac{1}{tf}\right] \end{pmatrix}, &\text{if }  \deg(f) = -1 \text{ and } n \ge 0,\\
\dfrac 1t \begin{pmatrix} 1 & -t^{-1}[tf] \\ 0 & t^{-1} \end{pmatrix}^{-1} = \begin{pmatrix} t^{-1} & t^{-1} [tf] \\ 0 & 1 \end{pmatrix}, &\text{if } n < 0.
\end{cases}$$

For each $f \in \mathbb L$, 
if $\ell = 2\deg(A_1) + \dots + 2\deg(A_k) + i$, $0 \le i < \deg(A_{k+1})$, then 
$$ M(f, 0) \cdots M(F^{\ell-1}(f, 0)) 
= \begin{pmatrix} P_{k-1} & P_k \\ Q_{k-1} & Q_k \end{pmatrix} 
\begin{pmatrix} 1 & 0 \\ 0 & t^{i} \end{pmatrix}= \begin{pmatrix} P_{k-1} & t^{i} P_k \\ Q_{k-1} & t^{i} Q_k \end{pmatrix}. $$
If $\ell = 2\deg(A_1) + \dots + 2\deg(A_k) + \deg(A_{k+1})+ i$, $0 \le i \le \deg(A_{k+1})$, then 
\begin{equation*} \begin{split} M(f, 0) \cdots M(F^{\ell-1}(f, 0)) 
&= \begin{pmatrix} P_{k-1} & P_k \\ Q_{k-1} & Q_k \end{pmatrix} 
\begin{pmatrix} 0 & 1 \\ t^{m-i} & a_m t^{m} + \dots + a_{m-i}t^{m-i} \end{pmatrix} \\
&= \begin{pmatrix} t^{m-i}P_{k} & (a_m t^{m} + \dots + a_{m-i}t^{m-i} )P_k + P_{k-1} \\ t^{m-i}Q_{k} & (a_m t^{m} + \dots + a_{m-i} t^{m-i} )Q_k + Q_{k-1} \end{pmatrix},
\end{split} \end{equation*}
where $A_{k+1} = a_m t^m + \dots + a_1t + a_0$.

By applying the Farey map $\ell$-times, the first vertex of geodesic $[\begin{smallmatrix} 1 & 0 \\ 0 & t \end{smallmatrix} ]$ sent to the vertex represented by the matrix $ M(f, 0) \cdots M(F^{\ell-1}(f, 0)) $.
The geodesic $\begin{pmatrix} t^{m-i}P_{k} & (a_m t^{m} + \dots + a_{m-i}t^{m-i} )P_k + P_{k-1} \\ t^{m-i}Q_{k} & (a_m t^{m} + \dots + a_{m-i} t^{m-i} )Q_k + Q_{k-1} \end{pmatrix} \D_0$
has the limit point 
$$ \frac{(a_m t^m + \dots + a_{m-i}t^{m-i} )P_k + P_{k-1}}{(a_m t^m + \dots + a_{m-i}t^{m-i} )Q_k + Q_{k-1}}.$$
Therefore, the Hamenst\"adt distance between $f$ and $\frac{(a_m t^m + \dots + a_{m-i}t^{m-i} )P_k + P_{k-1}}{(a_m t^m + \dots + a_{m-i}t^{m-i} )Q_k + Q_{k-1}}$ is less than or equal to $q^{-\ell}$.
Hence, we have
$$
\left| f - \frac{(a_m t^m + \dots + a_{m-i}t^{m-i} )P_k + P_{k-1}}{(a_m t^m + \dots + a_{m-i}t^{m-i} )Q_k + Q_{k-1}} \right|  \le q^\ell = \frac{q^{-i}}{|Q_k|\cdot| Q_{k+1}|} .
$$

\subsection{Ergodic theory of the geometric Farey map}

Let $\mu$ be the Haar measure of $\mathbb F_{q}((t^{-1}))$ 
normalized as $\mu (\mathcal O) = 1$.
Then, as it is stated in the introduction,
The measure $\mu_G$ on $\mathbb L \times \mathbb Z$ defined by for each measurable $\mathbf E \subset \mathbb L$
$$\mu_G (\mathbf E \times \{n\}) = \begin{cases} \dfrac{q-1}{2q^{n}} \mu (\mathbf E), \text{ for } n \ge 0, \\
\dfrac{q-1}{2q^{-n-1}} \mu (\mathbf E), \text{ for } n < 0 \end{cases} $$
is an invariant measure for the geometric Farey map $F$.

\begin{proof}[Proof of Theorem~\ref{thm1}]
For each measurable $\mathbf E \subset \mathbb L$, if $n > 0$, we have
\begin{equation*}\begin{split}
\mu_G( F^{-1} ( \mathbf E \times \{n\} )) 
&= \mu_G( t^{-1} \mathbf E \times \{n-1\} )
= \frac{q-1}{2q^{n-1}} \mu( t^{-1} \mathbf E ) \\
&= \frac{q-1}{2q^{n}} \mu( \mathbf E )
= \mu_G( \mathbf E \times \{n\})
\end{split} \end{equation*}
and if $n = 0$, we have
\begin{equation*}\begin{split}
\mu_G( F^{-1} ( \mathbf E \times \{ 0 \} )) 
&= \mu_G \left( \bigcup_{a \in \F} \left( a + t^{-1} \mathbf E \right) \times \{-1\} \right) 
= q \cdot \mu_G \left( t^{-1} \mathbf E \times \{-1\} \right) \\
&= \mu_G \left( t^{-1} \mathbf E \times \{-1\} \right) 
= \frac{q-1}{2} \cdot \mu( \mathbf E ) 
= \mu_G ( \mathbf E \times \{ 0 \}).
\end{split} \end{equation*}

Suppose $n < 0$. Then since for each measurable $\mathbf E \subset \mathbb L$ and $a \in \mathbb F_{q}^*$
\[
\mu ( (a + {\bf E})^{-1} ) = \mu (\mathbf E),
\]
we have
\begin{equation*}\begin{split}
\mu_G( F^{-1} ( \mathbf E \times \{ n \} )) 
&= \mu_G \left( \bigcup_{a \in \F} \left( a + t^{-1} \mathbf E \right) \times \{n-1\} \right) \\
& \qquad + \mu_G \left( \bigcup_{a \in \F^*} t^{-1} (a + {\mathbf E})^{-1} \times \{-n-1\} \right) \\
&= q \cdot \mu_G \left( t^{-1} \mathbf E \times \{n-1\} \right) + \frac{q-1}{q} \cdot \mu_G \left( (1 + {\mathbf E})^{-1} \times \{-n-1\} \right) \\
&= \mu_G \left( \mathbf E \times \{n-1\} \right) + \frac{q-1}{q} \cdot \frac{q-1}{2q^{-n-1}} \cdot \mu \left( (1 + {\mathbf E})^{-1} \right) \\
&= \frac{q-1}{2q^{-n}} \cdot \mu ( \mathbf E ) + \frac{q-1}{q} \cdot \frac{q-1}{2q^{-n-1}} \cdot \mu ({\mathbf E}) 
= \mu_G ( \mathbf E \times \{ n \}).
\end{split} \end{equation*}

The ergodicity follows the fact that the Artin map is a jump transformation of $F$ and that the Artin map is ergodic with respect to the Haar measure $\mu$, see \cite{Schw}.
\end{proof}

\section{algebraic Farey map}\label{sec:4}
In this section, we define another family of Farey maps, which we call algebraic Farey maps, more suitable to obtain intermediate convergents. 
In the special case of $h=t^{-1}$, the Farey map $F_h$ is a slight modification of the geometric Farey map.

As was mentioned in the introduction, we define
Farey maps, for which the Farey map of Berth\'e, Nakada and Natsui \cite{BerNakNat} is either a special case or an accelleration of our Farey map. Let 
us first define intermediate convergents and algebraic Farey maps, and explain the geometrical and 
dynamical motivation.

\subsection{Intermediate convergents}\label{sec:3.2}
Recall that intermediate convergents in the real case are defined as rational numbers of the form
$ (a p_k + p_{k+1})/(aq_k + q_{k-1}), \;\; \; 0 < a < a_{k+1}.$
Alternatively, by letting $b= a_{k+1} -a$ and using the recursive relations $p_{k+1} = a_{k+1}p_k + p_{k-1}$,
it is equivalent to 
$$ \frac{p_{k+1} - bp_k}{q_{k+1} - bq_k}, \;\;\; 0 < b < a_{k+1}.$$
In analogy with the real case, we define intermediate convergents in the function field case as follows.
\begin{definition} The intermediate convergents are rational functions of the form
$$ \frac{P_{k+1} - BP_k}{Q_{k+1} - BQ_k}, \ B \in \F[t] \text{ with }  0 < |B| < | A_{k+1}|.$$
\end{definition}

\begin{theorem}\label{thm:3.1}
For $B \in \F[t]$ with $|B| \le |A_{k+1}|$, we have 
$$
\left| f - \frac{P_{k+1} - BP_k}{Q_{k+1} - BQ_k} \right| = \frac{{|B|}}{|Q_{k+1}|^2}
$$

If $U/V \in \F(t)$ with $\deg(Q) = \deg(Q_{k+1})$ satisfies
$$
\left| f - \frac{U}{V} \right| < \frac{1}{|Q_{k+1}|\cdot |Q_k|},
$$
then we have 
$$\frac UV = \frac{P_{k+1} - BP_k}{Q_{k+1} - BQ_k}, $$ for some $B \in \F[t]$ with $|B| < |A_{k+1}|$.
\end{theorem}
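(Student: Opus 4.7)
My plan is to derive both claims from the single identity
\[
fV - U \;=\; \alpha\,(fQ_{k+1} - P_{k+1}) \,+\, \beta\,(fQ_k - P_k),
\]
which holds whenever $U = \alpha P_{k+1} + \beta P_k$ and $V = \alpha Q_{k+1} + \beta Q_k$. The inputs are the estimate $|fQ_j - P_j| = 1/|Q_{j+1}|$ recalled in the preliminaries, the unimodular relation $P_{k+1} Q_k - P_k Q_{k+1} = \pm 1$, and the ultrametric rule ``the strongest wins''.

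For the equality I first take $\alpha = 1$, $\beta = -B$. The two summands on the right have absolute values $1/|Q_{k+2}|$ and $|B|/|Q_{k+1}|$; since $|B| \ge 1 > 1/|A_{k+2}| = |Q_{k+1}|/|Q_{k+2}|$ they are unequal, so the ultrametric rule gives $|fV - U| = |B|/|Q_{k+1}|$. When $|B| < |A_{k+1}|$ we also have $|BQ_k| < |Q_{k+1}|$, hence $|V| = |Q_{k+1}|$; dividing produces the advertised value $|B|/|Q_{k+1}|^2$.

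For the converse, the strategy is to use the unimodular identity to decompose $(U,V)$ in the basis formed by two consecutive principal convergents: there exist unique $\alpha, \beta \in \F[t]$ with $U = \alpha P_{k+1} + \beta P_k$ and $V = \alpha Q_{k+1} + \beta Q_k$. I aim to show that the two hypotheses together force $\alpha$ to be a nonzero constant; once this is done, rescaling $(U, V)$ by $\alpha^{-1}$ (which does not alter the rational function $U/V$) reduces to $\alpha = 1$, and then $B := -\beta$ puts $U/V$ into the claimed form.

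The hard part is ruling out $|\alpha| \ge q$ and $\alpha = 0$. If $|\alpha| \ge q$, then $|\alpha Q_{k+1}| \ge q |Q_{k+1}|$, so the requirement $|V| = |Q_{k+1}|$ forces the leading term of $\alpha Q_{k+1}$ to cancel against $\beta Q_k$; in particular $|\beta| = |\alpha| |A_{k+1}|$. Substituting into the key identity yields
\[
\frac{|\beta|}{|Q_{k+1}|} \;=\; \frac{|\alpha|}{|Q_k|} \;>\; \frac{|\alpha|}{|Q_{k+2}|},
\]
so the ultrametric rule gives $|fV - U| = |\alpha|/|Q_k| \ge 1/|Q_k|$, contradicting $|fV - U| < |V|/(|Q_k||Q_{k+1}|) = 1/|Q_k|$. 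If $\alpha = 0$ instead, then $U/V$ collapses to $P_k/Q_k$, whose reduced denominator has degree $\deg Q_k < \deg Q_{k+1}$, contradicting the hypothesis. This leaves $|\alpha| = 1$; after normalization, the identity with $\alpha = 1$ and $\beta = -B$ gives $|fV - U| = |B|/|Q_{k+1}|$ when $B \ne 0$, and the approximation bound translates directly into $|B| < |A_{k+1}|$.
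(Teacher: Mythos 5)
Your proof of the equality is essentially the paper's: both expand $fV-U$ as $(fQ_{k+1}-P_{k+1})-B(fQ_k-P_k)$ and let the ultrametric pick out the dominant term $|B|/|Q_{k+1}|$. For the converse, however, you take a genuinely different route. The paper expands the denominator alone in the Ostrowski-type basis, $V=aQ_{k+1}+B_{k+1}Q_k+\dots+B_{s+1}Q_s$ with $a\in\F^{*}$ and $|B_{i+1}|<|A_{i+1}|$, computes $|\{Vf\}|=|B_{s+1}|/|Q_{s+1}|$, and forces $s=k$ from the hypothesis $|\{Vf\}|<|Q_k|^{-1}$; you instead use unimodularity of $\bigl(\begin{smallmatrix} P_{k+1} & P_k\\ Q_{k+1} & Q_k\end{smallmatrix}\bigr)$ to write $U=\alpha P_{k+1}+\beta P_k$, $V=\alpha Q_{k+1}+\beta Q_k$ with $\alpha,\beta\in\F[t]$, and eliminate $|\alpha|\ge q$ and $\alpha=0$ by ultrametric estimates on $fV-U$ and on $|V|$. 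Both arguments are sound; yours has the advantage of treating numerator and denominator simultaneously, so the identification of $U$ (which the paper's proof leaves implicit --- it only pins down the shape of $V$) comes for free, while the paper's expansion is shorter and leans on the standard best-approximation fact $|\{Q_kf\}|=1/|Q_{k+1}|$ term by term. Two small remarks: (i) you prove the displayed equality only for $|B|<|A_{k+1}|$, since that is where you justify $|Q_{k+1}-BQ_k|=|Q_{k+1}|$; the theorem is stated with $|B|\le|A_{k+1}|$ and the paper's computation silently assumes the same degree claim, which can actually fail at the boundary (e.g.\ $B=A_{k+1}$ gives $V=Q_{k-1}$), so your restriction is the honest range --- just state explicitly that you treat $|B|<|A_{k+1}|$, which is the range used later in the paper. (ii) In the case $\alpha=0$ you argue via the reduced denominator having degree $\deg Q_k$; if $U,V$ are not assumed coprime this does not immediately contradict $\deg V=\deg Q_{k+1}$, but the case is still excluded because then $|f-U/V|=|f-P_k/Q_k|=1/(|Q_k|\,|Q_{k+1}|)$, which is not strictly smaller than the hypothesized bound.
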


\begin{proof}
We have 
\begin{align*}
\left| f - \frac{P_{k+1} - BP_k}{Q_{k+1} - BQ_k} \right|  &= \frac{| (Q_{k+1} - BQ_k)f - (P_{k+1} - BP_k)|}{| Q_{k+1} - BQ_k|} \\
&=\frac{ | (Q_{k+1} f - P_{k+1}) +B(P_k - Q_kf)|}{ |Q_{k+1}|} \\
&= \frac{|B||Q_k f-P_k|}{|Q_{k+1}|} = \frac{|B|}{|Q_{k+1}|^2}.
\end{align*}


By the division algorithm, we have $V = a Q_{k+1} + B_{k+1} Q_{k} + \dots + B_{s+1} Q_s $ for some $s \ge 0$,
where $a \in \F^*$, $|B_{i+1}| < |A_{i+1}|$, $s \le i \le k$ and $B_{s+1} \ne 0$.
It follows that \begin{align*}
|\{ Vf \}| &= |\{ a Q_{k+1}f + B_{k+1} Q_{k}f + \dots + B_{s+1} Q_s f \}| \\
&= |\{B_{s+1} Q_s f \}| = \frac {|B_{s+1}|}{|Q_{s+1} |} .
\end{align*}
From the assumption, we have $\left|\{ V f \} \right| <  |Q_k|^{-1}$, which implies that $s = k$.
\end{proof}

\subsection{Algebraic Farey maps on the function field} 

\begin{definition}
For a given $h \in {\mathbb L}$ with $\deg(h) = -1$,
the \textit{Farey map $F_h$ associated to $h$} is defined as 
$$ F_h (f) = \begin{cases}
\dfrac{f}{1-[(1-h) f^{-1}]f},  \quad & \deg (f) \le -1, \\
\dfrac{1-[ (1-h) f^{-1}]f}{f},  & \deg (f) = 0. \\
\end{cases}$$
\end{definition}  


Then we have  
$$F_h : \cfrac{1}{A_1 + \cfrac{1}{A_2 + \ddots}} \mapsto 
\cfrac{1}{[gA_1]} + \cfrac{1}{A_2 + \ddots},
\qquad \cfrac{1}{a_0 + \cfrac{1}{A_2 + \ddots}} \mapsto \cfrac{1}{ A_2 + \ddots}, \cdots$$

\begin{example}\label{exm1}
Let $h = t^{-1}$. 
For an example, put $$f = \frac{1}{2t^3 + t^2 + 2 + r}, \quad \deg(r) < 0. $$ Then we have
\begin{equation*}
F_h(f) = \frac{1}{2t^2 + t + r}, \quad
F_h^2(f) = \frac{1}{2t + 1 + r}, \quad
F_h^3(f) = \frac{1}{2 + r}, \quad F_h^4(f) = r. 
\end{equation*}
\end{example}

Clearly we have $$ F_h^{-\deg (f) + 1} (f) = \Psi (f),$$
where $\Psi$ is the Artin map.

\begin{figure}
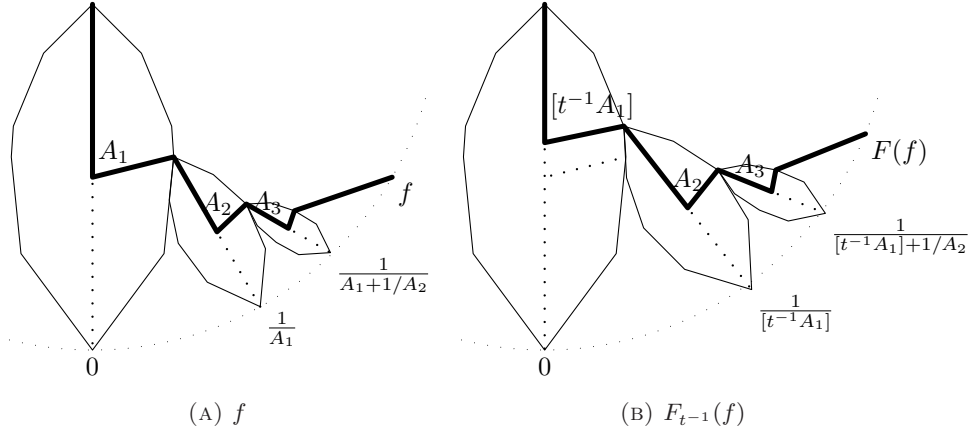

	\begin{subfigure}{0.48\textwidth}
		\centering
		\includegraphics{diagram-21.mps}
		\caption{$f$}
	\end{subfigure}
	\begin{subfigure}{0.48\textwidth}
		\centering
		\includegraphics{diagram-23.mps}
		\caption{$F_{t^{-1}}(f)$}
	\end{subfigure}
	\caption{The action of the Farey map $F_{t^{-1}}$ on the tree}
	\label{fig:2}
\end{figure}



Let us give a geometric motivation of the Farey map  $F_{h}$ defined
above. In Figure~\ref{fig:2},
the thick line represents the vertical geodesic from $\infty$ to $f$, which intersects Ford balls based on $P_0/Q_0 = 0/1$, $P_1/Q_1 = 1/A_1$, $P_2/Q_2=1/(A_1 + 1/A_2)$ and so on, by Lemma~\ref{lem:2}.
The geodesic to $F_{t^{-1}}(f)$ follows the path determined by $A_i, i=2, \cdots$ in each Ford ball except for the first Ford ball where it follows the path determined by $[t^{-1}A_1]$. More generally, the geodesic to $F_h(f)$ follows the path determined by $A_i, i=2, \cdots$ in each Ford ball except for the first Ford ball where it follows the path determined by $[h A_1]$.




\subsection{Farey map and intermediate convergents}\label{sec:4.2} 
In this subsection, we explain that since $K$ is algebraically closed, we obtain all the
intermediate convergents via Farey maps.


Let $M_h (f)$ be the matrix defined by 
$$M_h (f) = \begin{cases}
\begin{pmatrix} 1 & 0 \\ [(1-h) f^{-1}] & 1 \end{pmatrix}, &\text{ if } \deg (f) < 0, \\
\begin{pmatrix} 0 & 1 \\ 1 & [ (1-h) f^{-1}] \end{pmatrix}, &\text{ if } \deg (f) = 0. 
\end{cases}$$
Then the geodesic to $f$ corresponds to the sequence of matrices
$$ M_h(f) M_h(F_h(f)) M_h(F_h^2(f)) \cdots $$

\begin{proposition}
For each $f \in \mathbb L$  with $$\left[\frac 1f \right] = A_{1} = a_mt^m + a_{m-1}t^{m-1} + \dots + a_0, \ m = -\deg(f),$$ we have
$$ M_h(f) M_h(F_h(f)) \cdots M_h(F_h^{-\deg(f)}(f)) = \begin{pmatrix} 0 & 1 \\ 1 & A_1 \end{pmatrix} .$$
Moreover, if $\ell = \deg(A_1) + 1 + \deg(A_2) + 1 + \dots + \deg(A_k) + 1 + i$, $0 \le i \le \deg(A_{k+1})$, then 
\begin{equation*} \begin{split} M_h(f) \cdots M_h(F_h^{\ell-1}(f)) 
&= \begin{pmatrix} P_{k-1} & P_k \\ Q_{k-1} & Q_k \end{pmatrix} 
\begin{pmatrix} 1 & 0 \\ [ ( 1 - h^i ) A_{k+1} ] & 1 \end{pmatrix} \\
& 
= \begin{pmatrix} P_{k+1} - [ h^i A_{k+1} ] P_k  & P_k \\ Q_{k+1} - [ h^i A_{k+1} ] Q_k  & Q_k \end{pmatrix}.
\end{split} \end{equation*}
\end{proposition}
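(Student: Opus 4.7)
The plan is to prove both assertions together by induction on $k$, after a preparatory analysis of how $F_h$ acts on the continued fraction expansion. The preparatory claim is that, for $f$ with partial quotients $A_1, A_2, \dots$ and for $0 \le j \le \deg(A_1)$, the iterate $F_h^j(f)$ may be written formally as $1/([h^j A_1] + 1/(A_2 + \cdots))$, while $F_h^{\deg(A_1)+1}(f) = 1/(A_2 + \cdots)$ realizes the full shift. The key algebraic identity is $[h \cdot [h^{j-1} A_1]] = [h^j A_1]$, which follows from $h^j A_1 = h \cdot [h^{j-1} A_1] + h \cdot \{h^{j-1} A_1\}$ together with $\deg(h \cdot \{h^{j-1} A_1\}) \le -2$.

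Next I compute each factor $M_h(F_h^j f)$ individually. For $0 \le j < \deg(A_1)$, writing $(F_h^j f)^{-1} = [h^j A_1] + r_j$ with $\deg r_j < 0$, a direct expansion yields $[(1-h)(F_h^j f)^{-1}] = [h^j A_1] - [h^{j+1} A_1]$, so these matrices are lower-triangular with that entry in the lower-left corner; their product telescopes to $\begin{pmatrix} 1 & 0 \\ A_1 - [h^{\deg(A_1)} A_1] & 1 \end{pmatrix}$. At $j = \deg(A_1)$ the point $F_h^j f$ has degree $0$, forcing the second branch of $M_h$, and a parallel polynomial-part computation gives $M_h(F_h^{\deg(A_1)} f) = \begin{pmatrix} 0 & 1 \\ 1 & [h^{\deg(A_1)} A_1] \end{pmatrix}$. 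Multiplying the telescoped block by this final anti-diagonal factor produces $\begin{pmatrix} 0 & 1 \\ 1 & A_1 \end{pmatrix}$ by direct entry arithmetic, proving the first assertion.

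For the second assertion I iterate: after $\ell_k := \sum_{j=1}^k (\deg(A_j)+1)$ steps, $F_h^{\ell_k} f$ equals $1/(A_{k+1} + 1/(A_{k+2} + \cdots))$, so the first-assertion computation applies to each block in turn and the full product factors as $\prod_{j=1}^k \begin{pmatrix} 0 & 1 \\ 1 & A_j \end{pmatrix} = \begin{pmatrix} P_{k-1} & P_k \\ Q_{k-1} & Q_k \end{pmatrix}$ by the classical convergent-matrix recursion. For intermediate $\ell = \ell_k + i$ with $0 \le i \le \deg(A_{k+1})$, the additional $i$ factors are all lower-triangular and telescope to $\begin{pmatrix} 1 & 0 \\ A_{k+1} - [h^i A_{k+1}] & 1 \end{pmatrix} = \begin{pmatrix} 1 & 0 \\ [(1-h^i) A_{k+1}] & 1 \end{pmatrix}$, the last equality using that $A_{k+1}$ is a polynomial and that taking polynomial parts is $\mathbb{F}_q$-linear. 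The alternative single-matrix form in the proposition then follows by computing the $2 \times 2$ product and invoking $P_{k+1} = A_{k+1} P_k + P_{k-1}$ and the analogous identity for $Q$.

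The main obstacle I expect is the careful bookkeeping of the polynomial-part operator across iterations --- verifying $[h^{j+1} A] = [h \cdot [h^j A]]$ uniformly, pinning down the exact step at which $\deg F_h^j f$ drops to $0$ so that the branch of $M_h$ flips, and confirming that the stated range $0 \le i \le \deg(A_{k+1})$ keeps every additional factor strictly in the first-branch (lower-triangular) regime. Once these discrete checks are in place, the remaining work reduces to clean telescoping together with the standard continued fraction matrix identity.
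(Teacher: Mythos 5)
Your proposal is correct. The paper states this proposition without any proof (it is left as a computation), so there is no argument to compare against; your write-up supplies exactly the verification the authors leave implicit, and it is the natural one: the key identity $[\,h\cdot[h^{j}A]\,]=[h^{j+1}A]$ (valid since $\deg(h\{h^{j}A\})\le -2$), the preservation of the tail $1/(A_{2}+\cdots)$ under each step, the telescoping of the unipotent factors to $\bigl(\begin{smallmatrix}1&0\\ A_{1}-[h^{m}A_{1}]&1\end{smallmatrix}\bigr)$, the branch switch of $M_h$ precisely at $j=\deg(A_{1})$ where $\deg(F_h^{j}f)=0$, and the classical recursion $\prod_{j\le k}\bigl(\begin{smallmatrix}0&1\\ 1&A_{j}\end{smallmatrix}\bigr)=\bigl(\begin{smallmatrix}P_{k-1}&P_{k}\\ Q_{k-1}&Q_{k}\end{smallmatrix}\bigr)$ together with $P_{k+1}=A_{k+1}P_{k}+P_{k-1}$. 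This is also consistent with the paper's surrounding statements ($F_h^{-\deg f+1}(f)=\Psi(f)$ and Example~\ref{exm1}), and your range check $0\le i\le\deg(A_{k+1})$ keeping all extra factors in the lower-triangular branch is the only delicate bookkeeping point, which you handle correctly.
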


For $1 \le i \le \deg(A_{k+1})$, denote 
\begin{equation*}
U^h_{k,i} = P_{k+1} - [ h^i A_{k+1} ] P_k, \qquad
V^h_{k,i} = Q_{k+1} - [ h^i A_{k+1} ] Q_k.
\end{equation*}
We call $\frac{U^h_{k,i}}{V^h_{k,i}}$ the intermediate convergent of $f$ with respect to $h$.


From Theorem~\ref{thm:3.1}, it follows that for $1 \le i \le \deg(A_{k+1})$, we have 
$$
\left| f - \frac{U^h_{k,i}}{V^h_{k,i}} \right| = \frac{q^{-i}}{|Q_{k+1}| \cdot |Q_{k}|}
$$


Let us recall that the Farey map $F_\mathbb{J}$ of Nakada et al.\cite{BerNakNat} is defined as 
$$ 
F_{\mathbb J}(f) = \begin{cases} \dfrac{1}{G(f)}, &\text{ if } \deg G(f)  \ge 0, \\
\dfrac 1f - \left[ \dfrac 1f \right], &\text{ if } \deg G(f) <0. \end{cases} $$
Here,
$$ G(f) = \frac 1f - \frac{1}{LT(f)}$$
with $LT(f)$ being the leading term of $f$.

\begin{proposition}
For each $f$, there exist $s \in \mathbb N$ and $h \in \bL$ with $\deg(h) = -1$ such that $F^s_{h} (f) = F_{\mathbb{J}}(f)$.
\end{proposition}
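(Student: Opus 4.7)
I would compute $F_{\mathbb J}(f)$ and $F_h^s(f)$ in closed form using the continued fraction data of $f$, then solve for $h$ and $s$ to make them agree. Writing $1/f = A_1 + \Psi(f)$ with $A_1 = a_m t^m + \dots + a_0$ and $m = -\deg f$, the leading term of $f \in \bL$ is $LT(f) = a_m^{-1} t^{-m}$, whence $G(f) = 1/f - 1/LT(f) = (A_1 - a_m t^m) + \Psi(f)$. This makes the case split in the definition of $F_{\mathbb J}$ transparent: $\deg G(f) < 0$ iff $A_1$ is a monomial, in which case $F_{\mathbb J}(f) = \Psi(f)$; otherwise $F_{\mathbb J}(f) = 1/((A_1 - a_m t^m) + \Psi(f))$. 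On the $F_h$ side, reading the matrix identity of Section~\ref{sec:4.2} at $k = 0$, $0 \le i \le m$, together with $\Psi \circ F_h = \Psi$ (immediate from the definition), one obtains
\begin{align*}
F_h^s(f) &= \frac{1}{[h^s A_1] + \Psi(f)}, \qquad 1 \le s \le m, \\
F_h^{m+1}(f) &= \Psi(f).
\end{align*}

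\textbf{Choice of $s$ and $h$.} If $A_1$ is a monomial I take $s = m + 1$ and any $h$ with $\deg h = -1$; both sides then equal $\Psi(f)$. Otherwise let $B := A_1 - a_m t^m \ne 0$ and $s := m - \deg B \in \{1,\ldots,m\}$. The identity $F_h^s(f) = F_{\mathbb J}(f)$ reduces to $[h^s A_1] = B$. The cleanest way to achieve this is to force the stronger equality $h^s A_1 = B$ inside $K$, that is, to take $h$ to be a formal $s$-th root of $B/A_1 \in \bL$. Since $\deg(B/A_1) = j - m = -s$, any such root has $\deg h = -1$ as required, and $[h^s A_1] = [B] = B$ is then automatic. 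In the special case $s = 1$, i.e.\ when $a_{m-1} \ne 0$, no root extraction is needed at all: $h = (A_1 - a_m t^m)/A_1$ has degree $-1$ and satisfies $hA_1 = B$ by construction, which is the cleanest instance of the mechanism.

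\textbf{Main obstacle.} The entire technical content of the argument is the existence of the $s$-th root of $B/A_1$ in $K$. By matching Laurent coefficients, this reduces to solving a single equation in $\F^\ast$, namely $h_{-1}^s = (\text{leading coefficient of } B)/a_m = a_{m-s}/a_m$; once $h_{-1}$ is chosen, the subsequent coefficients $h_{-2}, h_{-3}, \ldots$ of $h$ are determined one at a time by triangular linear equations and present no further difficulty. Verifying that this single leading-coefficient equation always has a solution in $\F^\ast$ is the one place where the proof requires genuine algebraic input (it is automatic when $\gcd(s, q-1)=1$, and otherwise is the step I would examine most carefully), after which the matrix formula for $F_h^s$ closes the argument.
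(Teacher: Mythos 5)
Your reduction is correct and is essentially the paper's own argument. The closed formulas $F_h^s(f)=1/([h^sA_1]+\Psi(f))$ for $1\le s\le m$ and $F_h^{m+1}(f)=\Psi(f)$ are right, the case where $A_1$ is a monomial is handled the same way as in the paper, and in the remaining case both you and the paper are forced to $s=m-\deg B$ (with $B=A_1-LT(A_1)$) and to producing $h$ with $[h^sA_1]=B$ by extracting an $s$-th root of an explicit element of degree $-s$: the paper takes $h^s=g$ with $g=G(f)\cdot f$, you take $h^s=B/A_1$; these two elements have the same degree $-s$ and the same leading coefficient $a_{m-s}/a_m$, so the difference is immaterial.

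However, the ``main obstacle'' you flag and leave open is a genuine gap, not a deferred formality, and you cannot borrow its resolution from the paper: the paper's proof simply asserts that $g$ has an $s$-th root of degree $-1$ in $\K$, with no justification. The equation $h_{-1}^s=a_{m-s}/a_m$ in $\F^\ast$ is solvable for every right-hand side only when $\gcd(s,q-1)=1$, since the image of $x\mapsto x^s$ on the cyclic group $\F^\ast$ is $(\F^\ast)^{\gcd(s,q-1)}$, while the ratio $a_{m-s}/a_m$ can be arbitrary; and weakening $h^s=B/A_1$ to the actually needed condition $[h^sA_1]=B$ does not help, because the leading-coefficient constraint is identical. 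The obstruction is real: take $q=3$ and $f$ purely periodic with $1/f=t^2+2+f$, so $A_1=t^2+2$, $B=2$, $s=2$ and $F_{\mathbb J}(f)=1/(2+f)$; for every admissible $h$ one computes $F_h(f)=1/(h_{-1}t+h_{-2}+f)$, $F_h^2(f)=1/(h_{-1}^2+f)=1/(1+f)$ (as $h_{-1}^2=1$ in $\mathbb F_3$), $F_h^3(f)=f$, after which the orbit repeats, so $F_h^s(f)\ne F_{\mathbb J}(f)$ for all $s$ and $h$. Thus the step you single out cannot be carried out in general, and no choice of root closes the argument without an additional hypothesis (e.g.\ that $a_{m-s}/a_m\in(\F^\ast)^{\gcd(s,q-1)}$). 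A secondary inaccuracy: your claim that the lower-order coefficients are determined by triangular linear equations ``with no further difficulty'' fails when the characteristic $p$ divides $s$, since then $h^s$ is a $p$-th power and hence a series in $t^{-p}$, so $h^s=B/A_1$ is typically unsolvable even when the leading coefficients match; in that case one must exploit the extra freedom in the weaker condition $[h^sA_1]=B$, which your write-up discards at the outset.
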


\begin{proof}
If $\deg G(f) < 0$, then $F_{\mathbb J}(f) = \Psi(f) = F_h^{-\deg(f)+1} (f)$ for any $h \in \bL$ with $\deg(h) = -1$.

Assume that $\deg G(f) \ge 0$.
Let 
$g = 1- f \cdot LT(f^{-1}) = (1/f - LT(f^{-1})) \cdot f = G(F) \cdot f$
and $s = - \deg(g) \le - \deg(f)$.
Then there exists $h \in \K$ with $\deg(h) = -1$ such that $h^s = g$.
Let $A_1 = [1/f]$. Then we have
$$[h^s A_1] = [g A_1]= [( 1- f \cdot LT(f^{-1}) )A_1] = A_1 - [A_1 f LT( A_1)] = A_1 - [LT(A_1)].$$ 
\end{proof}


By Theorem~\ref{thm:3.1}, we immediately have the following:
\begin{theorem}
If $P/Q \in \F(t)$ with $\deg(Q) = \deg(Q_{k+1})$ satisfies
$$
\left| f - \frac{P}{Q} \right| < \frac{1}{|Q_{k+1}|\cdot |Q_k|},
$$
then we have 
$$\frac PQ = \frac{U^{h}_{k,i}}{V^{h}_{k,i}},$$ for some $h$ and $k,i$.
\end{theorem}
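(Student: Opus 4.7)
The strategy is to chain Theorem~\ref{thm:3.1} with a realizability lemma identifying every admissible $B$ as $[h^i A_{k+1}]$ for some $h$ and $i$. I would first apply Theorem~\ref{thm:3.1} to the Diophantine hypothesis, producing $\frac{P}{Q} = \frac{P_{k+1} - B P_k}{Q_{k+1} - B Q_k}$ for some $B \in \mathbb{F}_q[t]$ with $|B| < |A_{k+1}|$. Comparing this with $\frac{U^h_{k,i}}{V^h_{k,i}} = \frac{P_{k+1} - [h^i A_{k+1}] P_k}{Q_{k+1} - [h^i A_{k+1}] Q_k}$, the theorem reduces to exhibiting $h \in \mathbb{L}$ with $\deg h = -1$ and $1 \le i \le \deg A_{k+1}$ such that $[h^i A_{k+1}] = B$.

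For the realizability step I would set $d = \deg B$ and $i = \deg A_{k+1} - d$; degree counting forces this choice of $i$ and places it in the admissible range $1 \le i \le \deg A_{k+1}$. The condition $[h^i A_{k+1}] = B$ is equivalent to requiring $h^i - B/A_{k+1}$ to have degree less than $-\deg A_{k+1}$, i.e.\ to matching the first $d+1$ Laurent coefficients of $h^i$ with those of $B/A_{k+1}$. Expanding $h = h_1 t^{-1} + h_2 t^{-2} + \cdots$ and equating coefficients yields a triangular system: the leading equation is $h_1^i\, a_m = b_d$ (where $a_m, b_d$ are the leading coefficients of $A_{k+1}$ and $B$), and the successive equations determine $h_2, h_3, \ldots$ linearly in terms of the previously fixed entries. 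Granting solvability of the leading equation, this recursion builds the required $h \in \mathbb{L}$. The degenerate case $B = 0$ corresponds to the principal convergent $P_{k+1}/Q_{k+1}$ and can be disposed of by taking $i$ so large that $[h^i A_{k+1}] = 0$.

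The main obstacle is solving the leading equation $h_1^i = b_d/a_m$ in $\mathbb{F}_q^*$: this requires $b_d/a_m$ to lie in the subgroup $(\mathbb{F}_q^*)^i$ of $i$-th powers, which is not automatic when $\gcd(i, q-1)$ is nontrivial, and in characteristic $p$ with $p \mid i$ the recursion for the higher coefficients degenerates as well (since $\partial (h^i)/\partial h_j$ vanishes). The opening remark of Section~\ref{sec:4.2}, that $K$ is ``algebraically closed'', is the paper's shorthand for invoking exactly these root extractions. To make the argument rigorous, I would either pass to a suitable algebraic extension of $K$, extract $h$ there, and then argue that $h$ descends into $\mathbb{L}$ because $B$ and $A_{k+1}$ have $\mathbb{F}_q$-coefficients, or invoke a Hensel-type lifting argument adapted to the non-Archimedean valuation on $K$. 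This $i$-th root extraction is where I expect the technical weight of the proof to sit.
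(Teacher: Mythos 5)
Your reduction is exactly the route the paper itself takes: its entire proof of this theorem is the citation of Theorem~\ref{thm:3.1}, with the realizability of every admissible $B$ as $[h^i A_{k+1}]$ left to the opening remark of Section~\ref{sec:4.2} that ``$K$ is algebraically closed''. Your degree count $i = \deg A_{k+1} - \deg B$ and the triangular coefficient system are the right way to make that hidden step explicit, and you have correctly located the weak point at the equation $h_1^i a_m = b_d$ and at the degeneration when $p \mid i$.

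However, the repairs you propose do not close the gap, and in fact the realizability step cannot be carried out inside $K$. The leading-coefficient obstruction is already fatal to the descent idea: take $q=3$, $A_{k+1}=t^2$, $B=2$, so $i=2$ and one needs $h_1^2 = 2$ in $\mathbb{F}_3$, which has no solution; any $h$ with $[h^2A_{k+1}]=2$ must have leading coefficient outside $\mathbb{F}_q$ and hence lies outside $K$ altogether, so there is nothing to ``descend'' --- the fact that $B$ and $A_{k+1}$ have $\mathbb{F}_q$-coefficients does not force $h$ back into $\mathbb{L}$, because the $i$-th power map on $\mathbb{F}_q^{*}$ is simply not surjective, and Hensel lifting cannot create roots that are absent from the residue field. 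When $p \mid i$ the failure is not confined to the leading term: in characteristic $2$ with $i=2$ one has $h^2=\sum_j h_j^2 t^{-2j}$, so for $A_{k+1}=t^3+a_2t^2+a_1t+a_0$ every admissible $h$ gives $[h^2A_{k+1}]=h_1^2t+h_1^2a_2=t+a_2$, and $B=t+a_2+1$ is never realized. Since $\deg V^{h}_{k',i'}=\deg Q_{k'+1}$ pins down the index $k$, such a $B$ yields a fraction satisfying the hypothesis of the theorem (by Theorem~\ref{thm:3.1} its distance to $f$ is $|B|/|Q_{k+1}|^2 < 1/(|Q_{k+1}||Q_k|)$) that is not of the form $U^{h}_{k,i}/V^{h}_{k,i}$ for any $h\in\mathbb{L}$. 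So the obstacle you flagged is a genuine gap in your argument as it stands --- and, to be fair, it is equally a gap in the paper, whose ``immediate'' proof never addresses it and whose appeal to algebraic closedness of $K$ is incorrect; a correct statement seems to require allowing $h$ to have coefficients in a finite extension of $\mathbb{F}_q$, which is a change of the theorem's meaning rather than a proof of it.
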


\subsection{Ergodic Theory of the Farey map}
Let $\mu$ be the Haar measure of $\mathbb F_{q}((t^{-1}))$ 
normalized as $\mu (\mathcal O) = 1$.
For each $n \ge 0$, denote  
$$\mathbb J_n = \{  f \in \mathcal O : \deg(f) = - n \}.$$ 
Define a measure $\mu_A$ on $\mathcal O$ given by 
$$
\mu_A ({\bf D}) = \frac{q^2}{2q-1} \cdot \mu ({\bf D} \cap \mathbb L) + \frac{q}{2q-1} \cdot \mu ( {\bf D} \cap \mathbb J_0),
$$
for each Borel set ${\bf D} \subset \mathcal O$. 
Then for each $h$, the probability measure $\mu_A$ on $\mathcal O$ is an ergodic invariant measure for the Farey map $F_h$.

\begin{proof}[Proof of Theorem~\ref{thm2}]
Suppose that $\bf D$ is a Borel subset of $\mathbb L$ and 
$P \in \mathbb F_{q}[t]$ 
with $\deg (P) = k \ge 0$.  We consider
\[
P+{\bf D} = \{ P + r \in \K :  r \in {\bf D}\},
\quad 
(P+{\bf D})^{-1} = \{f \in \K :  f^{-1} \in P + {\bf D} \}.
\]
Then we see 
\[
\mu( (P + {\bf D})^{-1} ) = \frac{1}{q^{2k}} \mu ( P + {\bf D}) . 
\]
For any Borel set $\bf D$ of $\mathcal O$, we can decompose it as a disjoint 
union such that 
\[
  {\bf D} = \bigcup_{ k=0}^{\infty} \bigcup_{P \in F_{q}[t], \deg P = k} (P + {\bf B}_P)^{-1} . 
\]
In this sense, it is enough to show $\mu_A (F_{h}^{-1}{\bf D}) 
= \mu_A({\bf D})$ 
for ${\bf D}$ of the form $(P + {\bf B})^{-1}$ with $P \in \mathbb F_{q}[t]$, 
$\deg P = k \ge0$. 
First we assume that $k = 0$. Then ${\bf D}$ is of the form 
$(a + {\bf B})^{-1}$ with 
a Borel set ${\bf B} \subset \mathbb L$ and $a \in \mathbb F_{q}^{\ast}$. 
For $f \in F_{h}^{-1} {\mathbf D}$, $F_{h}(f)$ is 
$\frac{1}{ 1/f - (b_{1} t + b_{0} - b_{1} h_{1})}$ 
where $[ 1/f ] = b_{1} t + b_{0}$ and $h_{1}$ is the leading 
coefficient of $h$.  This implies $b_{1} h_{1} = a$ and thus $b_{1}$ is 
uniquely determind when $a$ is fixed, on the otherhand, $b_{0}$ is free. 
This shows 
\[
F_{h}^{-1} {\mathbf D} = \bigcup_{b_{0} \in \mathbb F_{q}} \frac{1}{ a (h_{1})^{-1} t + b_{0} + {\bf B}}
\] 
and thus 
\begin{align*}
\frac{q^{2}}{2q-1} \mu (F_{h}^{-1}{\bf D}) &= \frac{q^{2}}{2q-1} \sum_{b_{0} \in \mathbb F_{q}} \frac{1}{q^{2}} \mu ({\bf B})
= \frac{q}{2q-1}  \mu (\mathbf B) \\
&= \frac{q}{2q-1}  \mu(a + {\bf B}) = \frac{q}{2q-1} \mu({\bf D}) , 
\end{align*}
which means $\mu_A (F_{h}^{-1}{\bf D}) = \mu_A ({\bf D})$. 

Next we assume that $k > 0$. By the similar way, we see that 
$f \in F_{h}^{-1}{\bf D} \cap \mathbb J_{k+1}$ $F_{h}(f)$ is of the form 
$ \frac{1}{P'+ {\bf B}}$ and the coefficients of $P'$ are completely fixed by $h$ and $P$ except for the constant term. Thus we have 
\[
\mu_A (F_{h}^{-1}{\bf D} \cap \mathbb J_{k+1}) = \frac{1}{q}\mu_A ({\bf D}). 
\]  
On the other hand, $f \in F_{h}^{-1}{\bf D} \cap \mathbb J_{0}$ is equivalent 
to $f \in \cup_{a \in \F^{\ast}} \left( a + \frac{1}{P + {\bf D}} \right)^{-1}$. 
Here 
\[
\frac{q}{2q -1} \mu \left( \frac{1}{a+ \frac{1}{P + {\bf D}}} \right)
= \frac{q}{2q -1}\mu \left( a+ \frac{1}{P + {\bf D}} \right) 
= \frac{q}{2q -1}\mu \left(\frac{1}{P + {\bf D}} \right).
\]
Thus 
\[
\mu_A (F_{h}^{-1}{\bf D} \cap \mathbb J_{0}) = \frac{q-1}{q}\mu_A({\bf D}).
\]
Consequently, we have 
\[
\mu_A (F_{h}^{-1}{\bf D} ) = \mu_A({\bf D}) . 
\]

Similarly with the proof of Theorem~\ref{thm1}, the ergodicity of $F_{h}$ with respect to $\mu_A$ is an easy consequence of the fact that the Artin map is a jump transformation of 
$F_{h}$ and that the Artin map is ergodic with respect to the Haar measure, see \cite{Schw}.
\end{proof}

Suppose that $A_{k} \in \mathbb F_{q}[t]$ is the $k$-th coefficient continued 
expansion of $f \in \mathbb L$.  
Let's write $\begin{pmatrix} U_{\ell} \\ V_{\ell}\end{pmatrix}$ 
the first column of $M_{h}(f) \cdots M_{h}(F_{h}^{\ell}(f))$.  If 
$\ell = \sum_{n=1}^{k} \deg A_{n} \, + \, k$, then it is 
$\begin{pmatrix} P_{k-1} \\ Q_{k-1}\end{pmatrix}$, i.e. $k$-th convergent of the continued fraction expansion of $f$.  Otherwise, 
$\begin{pmatrix} U_{k, i}^{h} \\ V_{k, i}^{h}\end{pmatrix}$ for 
$\ell =  \sum_{n=1}^{k} \deg A_{n} \, + \, k \, + i$ 
with $1 \le i \le \deg A_{k+1} $.
Then for $\mu$-almost every $f$, we have 
\[
\lim_{\ell \to \infty}
\frac{1}{\ell} \log_{q} \left| f \, - \, \frac{U_{\ell}}{V_{\ell}} \right| 
\, = \, -\frac{2q}{2q - 1}
\]

\begin{proof}[Proof of Theorem~\ref{thm3}]
We see 
\[
\frac{1}{\ell} \log_{q} \left| f \, - \, \frac{U_{\ell}}{V_{\ell}} \right| 
\, = \, - \frac{2 \sum_{n=1}^{k} \deg A_{n} \, + \, \deg A_{k+1} \, + \, i}
      {\ell}
\]
for $\ell =  \sum_{n=1}^{k} \deg A_{n} \, + \, k \, + i$ 
with $1 \le i \le \deg A_{k+1} $.  In this case, the right side is 
\[
- \frac{2 \sum_{n=1}^{k} \deg A_{n} \, + \, \deg A_{k+1} \, + \, i}
      {\sum_{n=1}^{k} \deg A_{n} \, + \, k \, + i}.
\]
For $\mu$-almost every $f$, we have (see \cite{BeNa})
\[
\lim_{k \to \infty} \frac{\sum_{n=1}^{k} \deg A_{n}}{k} = \frac{q}{q-1}
\]
and 
\[
\lim_{k \to \infty} \frac{\deg A_{k+1}}{k} = 0 .
\]
Thus we have 
\[
\frac{1}{\ell} \log_{q} \left| f \, - \, \frac{U_{\ell}}{V_{\ell}} \right| 
\]
converges to 
\[
- \frac{2 q}{2q - 1} . 
\]
along $\ell =  \sum_{n=1}^{k} \deg A_{n} \, + \, k \, + i$ 
with $1 \le i \le \deg A_{k+1} $.  If 
$\ell =  \sum_{n=1}^{k} \deg A_{n} \, + \, k$, then it is easy to see that the 
same holds.  Altogether we have 
\[
\lim_{\ell \to \infty}
\frac{1}{\ell} \log_{q} \left| f \, - \, \frac{U_{\ell}}{V_{\ell}} \right| 
\, = \, - \frac{2 q}{2q - 1} .
\]
\end{proof}

\section*{Acknowledgement}
Dong Han Kim is supported by KRF 2012R1A1A2004473. Seonhee Lim is supported by KRF 2012-000-8829, KRF 2012-000-2388.
Hitoshi Nakada is supported in part by the Grant-in-Aid for Scientific research (No. 24340020), JSPS.
Rie Natsui is supported in part by the Grant-in-Aid for Scientific research (No. 23740088), JSPS.

\end{document}